\DeclareOldFontCommand{\sc}{\normalfont\scshape}{\@nomath\sc}
\newtheorem{definition}{Definition}
\numberwithin{definition}{section}
\newtheorem{Theorem}[definition]{Theorem}
\newtheorem{lemma}[definition]{Lemma}
\newtheorem{assumption}[definition]{Assumption}
\theoremstyle{definition}
\crefname{customlem}{Lemma}{Lemmas}
\Crefname{customlem}{Lemma}{Lemmas}
\crefname{customthm}{Theorem}{Theorems}
\Crefname{customthm}{Theorem}{Theorems}
\crefname{corollary}{Corollary}{Corollaries}
\Crefname{assumption}{Assumption}{Assumptions}
\crefname{theorem}{Theorem}{Theorems}
\crefname{lemma}{Lemma}{Lemmas}
\crefname{proposition}{Proposition}{Propositions}
\numberwithin{equation}{section}
\crefname{enumi}{Assumption}{Assumptions}
\newlist{enumthm}{enumerate}{1}
\setlist[enumthm]{label=\textup{(\alph*)},ref=\theassumption~\textup{(\alph*)}}
\newcommand{\functionArgument}[1]{\ifthenelse{\equal{#1}{}}  %if #1={} then print 
	%nothing, else (#1)
	{}
	{({#1})}
}
\newcommand{\coperator}{K}
\newcommand{\maxo}[1]{(#1)_+}
\newcommand{\dualp}[3][]{\langle #2, #3 \rangle_{{#1}^*\!, #1}}
\newcommand{\dualpstar}[4][]{\langle #3, #4 \rangle_{#1, #2}}
\newcommand{\dualpHzeroone}[3][]
{\langle #2, #3 \rangle_{H^{-1}(\domain), H_0^1(\domain)}}
\newcommand{\wprox}[3][]{\mathrm{prox}_{#2}\functionArgument{#3}}
\newcommand{\prox}[2]{\wprox[]{#1}{#2}}
\newcommand{\vadcsp}{V_{\text{ad}}}
\newcommand{\norm}[2][2]{\left\lVert#2\right\rVert_{#1}}
\renewcommand{\norm}[2][2]{\|#2\|_{#1}}
\DeclarePairedDelimiterXPP\cnorm[2]{}\lVert\rVert{_{#1}}{#2}
\newcommand{\radius}{R_{\text{ad}}}
\newcommand{\rdc}{\kappa}
\newcommand{\rrhs}{b}
\newcommand{\inner}[3][]{( #2, #3 )_{#1}}
\newcommand{\obj}{f}
\newcommand{\pobj}{J}
\newcommand{\rpobj}{\widehat{\pobj}}
\newcommand{\erpobj}{F}
\newcommand{\dist}[2]{\mathrm{dist}\functionArgument{#1,#2}}
\newcommand{\deviation}[2]{\mathbb{D}\functionArgument{#1,#2}}
\newcommand{\csp}{U}
\newcommand{\adcsp}{\csp_\text{ad}}
\newcommand{\wadcsp}{W_\text{ad}}
\newcommand{\ssp}{Y}
\newcommand{\asp}{Z}
\newcommand{\domain}{D}
\DeclareMathOperator*{\argmin}{arg\,min}
\renewcommand{\natural}{\mathbb{N}}
\newcommand{\real}{\mathbb{R}}
\newcommand{\embedding}{\xhookrightarrow{}}
\newcommand{\bsp}{V}
\newcommand{\eu}{\ensuremath{\mathrm{e}}}
\newcommand{\du}{\ensuremath{\mathrm{d}}}
\newcommand{\Du}{\ensuremath{\mathrm{D}}}
\newcommand{\wpone}{w.p.~$1$\xspace}
\newcommand{\tas}{\text{as}}
\newcommand{\as}{\tas}
\newcommand{\frechet}{Fr\'echet}
\newcommand{\Caratheodory}{Carath\'eodory}
\newcommand{\hoelder}{H\"older}
\newcommand{\friedrichs}{Friedrichs\xspace}
\DeclarePairedDelimiterXPP\cE[1]{\mathbb{E}}[]{}{%
	
	#1}
\newcommand{\cA}{\mathcal{A}}
\newcommand{\cF}{\mathcal{F}}
\renewcommand{\abstract}[1]
{
	{\small
		\noindent\textbf{Abstract.} {#1}
		\\
	}
}
\newcommand{\keywords}[1]
{
	{\small
		\noindent\textbf{Key words.} {#1}
		\\
	}
}
\newcommand{\subclass}[1]
{
	{\small
		\noindent\textbf{AMS subject classifications.} {#1}
	}
}
\title{Consistency of Monte Carlo Estimators for Risk-Neutral
	PDE-Constrained Optimization}
\author{Johannes Milz\thanks{Technical University of Munich,
		Department of Mathematics, Boltzmannstr.\ 3, 85748 Garching, Germany,
		\texttt{milz@ma.tum.de}}}
\date{November 16, 2022}
\begin{document}

\maketitle

% REQUIRED

\abstract{%
	We apply the sample average approximation (SAA)
	method
	to risk-neutral optimization problems
	governed by nonlinear partial differential equations (PDEs)
	with random inputs.
	We analyze the consistency of the
	SAA optimal values and
	SAA solutions.
	Our analysis exploits problem structure in
	PDE-constrained optimization problems, allowing us to 
	construct deterministic, compact subsets 
	of the feasible set that contain the solutions to
	the risk-neutral problem and eventually those to the SAA problems.
	The construction is used to
	study the consistency using results established in the literature
	on stochastic programming.
	The assumptions of our framework are verified on
	three nonlinear optimization problems under uncertainty.
}

\keywords{stochastic programming, 
	Monte Carlo sampling, 
	sample average approximation, 
	optimization under uncertainty,
	PDE-constrained optimization}

\subclass{65C05, 90C15, 35R60, 90C48, 90C30, 60H25}

\section{Introduction}
\label{sec:intro}

Advances in areas such as computational science
and engineering, applied mathematics, software design,
and scientific computing have allowed decision makers to optimize complex 
physics-based systems under uncertainty, such as those 
modeled using partial differential equations (PDEs) with uncertain inputs. 
Recent applications in the field of PDE-constrained optimization 
under uncertainty are, for example,
oil field development \cite{Nasir2021},
stellarator coil optimization
\cite{Wechsung2021}, acoustic wave propagation \cite{Yang2017}, 
and shape optimization of electrical engines
\cite{Kolvenbach2018}.
In the literature on optimization under uncertainty, several approaches
have been proposed for obtaining decisions that are resilient to
uncertainty, such as robust optimization \cite{Ben-Tal2009}
and stochastic optimization \cite{Shapiro2003}.
When the parameter vector is modeled as a random vector with 
known probability distribution, a common approach is to 
seek decisions that minimize the expected value of a parameterized 
objective function. 
The resulting optimization problem is referred to as
risk-neutral optimization problem.
However, evaluating the risk-neutral problem's objective function
would require computing a potentially high-dimensional integral. 
Furthermore, each evaluation of 
the parameterized objective function may require the simulation of 
complex systems of PDEs, adding another challenge to obtaining
solutions to risk-neutral PDE-constrained optimization problems.

A common approach for approximating risk-neutral optimization problems 
is the sample average approximation (SAA) method, yielding the SAA problem.
For example,
the SAA approach is used in the literature on mathematical 
programming
\cite{Kleywegt2002,Shapiro2003,Shapiro2005} and on
PDE-constrained optimization 
\cite{Haber2012,Kouri2016,Roemisch2021,Wechsung2021}.
The SAA problem's objective function is the sample average of the
parameterized objective function computed using samples of the random vector.
To assess 
the quality of the SAA solutions as approximate solutions to the
risk-neutral problem, different error measures have been considered, such as the
consistency of the SAA optimal value and
of SAA solutions
\cite{Shapiro1993,Shapiro1991,Shapiro2014,Artstein1995,Hess1996,Lachout2005}, 
nonasymptotic sample size estimates 
\cite{Shapiro2003,Shapiro2005,Shapiro2008,Cucker2002,Royset2019},
mean and almost sure convergence rates \cite{Banholzer2019},
and confidence intervals for  SAA optimal values \cite{Guigues2017}.

A number of results on the SAA approach are based on the compactness
of either the feasible set or of sets that eventually 
contain the SAA solutions, such as the consistency properties of SAA solutions
and sample size estimates.
The analysis of the SAA approach as applied to PDE-constrained 
optimization problems is complicated by the fact that the feasible sets
are commonly noncompact, such as the set of square integrable
functions defined on the interval $(0,1)$ with values in $[-1,1]$. 
Moreover, level sets of the SAA objective function 
may not be contained in a deterministic, compact set
as shown in \cref{sec:saalevelsets}. 
Our approach for establishing consistency is based on 
that developed in \cite[Chap.\ 5]{Shapiro2014}. 
While the consistency results  in \cite[Chap.\ 5]{Shapiro2014} are established
for  finite dimensional stochastic programs, the results do not require the
compactness of the feasible set. Instead, they are valid provided that the 
solution set to the stochastic program and those to the SAA problems
are eventually contained in a deterministic, compact set. 

We establish the consistency of SAA optimal values
and SAA solutions to 
risk-neutral nonlinear PDE-constrained optimization problems. 
For analyzing the SAA approach,
we construct  deterministic, compact subsets
of the possibly infinite dimensional feasible sets
that contain the solutions
to risk-neutral PDE-constrained problems and
eventually those to the corresponding SAA problems.
This observation allows us to study the consistency using 
the tools developed in the literature on M-estimation
\cite{Huber1967,LeCam1953}
and  stochastic programming 
\cite{Shapiro2003,Shapiro2014}.
Our consistency results are inspired by and based on those established
in \cite[Sects.\ 2 and 7]{Shapiro2003} and \cite[Chap.\ 5]{Shapiro2014}.
For our construction of these compact sets, we use the fact that 
many PDE-constrained optimization problems involve compact operators, 
such as compact embeddings. Moreover, we use 
first-order optimality conditions and
PDE stability estimates.
The construction  is partly inspired by the computations used to 
establish higher regularity of solutions to deterministic
PDE-constrained optimization problems 
\cite[p.\ 1305]{Meidner2008},
\cite[Sect.\ 2.15]{Troeltzsch2010a}
and a computation made in the  author's dissertation 
\cite[Sect.\ 3.5]{Milz2021a} which demonstrates that all
SAA solutions to certain linear elliptic optimal control problems
are contained in a compact set.

The SAA method as applied to risk-neutral strongly convex
PDE-constrained optimization has recently been analyzed in
\cite{Hoffhues2020,Martin2021,Roemisch2021,Milz2021}.
The authors of \cite{Phelps2016} apply the SAA scheme to the optimal 
control of ordinary differential equations with random inputs and demonstrate
the epiconvergence of the  SAA objective function and
the weak consistency of SAA critical points in
the sense defined in \cite[Def.\ 3.3.6]{Polak1997}.
The weak consistency implies that accumulation points
of SAA critical points are critical points of the optimal control problem
\cite[p.\ 13]{Phelps2016}.

Monte Carlo sampling is one approach to approximating
expected values in stochastic program's objective functions.
For  strongly convex elliptic PDE-constrained optimization problems, 
quasi-Monte Carlo techniques are
analyzed in \cite{Guth2019}. 
Further discretization approaches for expectations are, for example,
stochastic collocation \cite{Tiesler2012} and low-rank tensor approximations
\cite{Garreis2017}. 
Besides risk-neutral PDE-constrained optimization,
risk-averse PDE-constrained optimization 
\cite{Alexanderian2017,Conti2011,Kouri2016,Kouri2018a,MartnezFrutos2018}, 
distributionally robust PDE-constrained optimization
\cite{Kouri2017,Milz2020a}, 
robust PDE-constrained optimization
\cite{Alla2019,Kolvenbach2018,Lass2017}, and
PDE-constrained optimization with chance constraints
\cite{Chen2020,Farshbaf-Shaker2020,Farshbaf-Shaker2018,Geletu2020,Tong2022}
provide approaches to decision making under uncertainty
with PDEs.

\subsection*{Outline}
We introduce notation in \cref{sec:notation}
and a class of risk-neutral nonlinear PDE-constrained optimization problems
and their SAA problems in \cref{sec:assumptions}.
\Cref{sec:compact_subsets} presents a compact 
subset that contains the solutions to the risk-neutral
problem and eventually those to its SAA problems.
We study the consistency of SAA optimal values and solutions 
in \cref{subsect:consistency_solutions}.
\Cref{sec:applications} discusses the application of our theory
to three nonlinear PDE-constrained optimization problems under uncertainty. 
We summarize our contributions, and discuss
some limitations of our approach and open research questions in 
\cref{sec:conclusion}.

\section{Notation and Preliminaries}
\label{sec:notation}
Throughout the paper, the control space $\csp$ is a 
real, separable Hilbert space
and is  identified with its dual, that is, we
omit writing the Riesz mapping.

Metric spaces are defined over the real numbers
and equipped with their Borel sigma-algebra. 
We abbreviate ``with probability one'' by \wpone.
Let $(\Theta, \cA, \mu)$ be probability space.
For two complete metric spaces 
$\Lambda_1$  and $\Lambda_2$, a mapping 
$G : \Lambda_1 \times \Theta \to \Lambda_2$
is a \Caratheodory\ mapping
if $G(\cdot, \theta)$ is continuous for all $\theta \in \Theta$ and
$G(v, \cdot)$ is measurable for each $v \in \Lambda_1$.
Let $\Lambda$ be a Banach space.
For each $N \in \natural$,
let $\Upsilon_N : \Theta  \rightrightarrows \Lambda$ be a 
set-valued mapping and let $\Psi \subset \Lambda$ be a set.
We say that \wpone for all sufficiently large
$N$, $\Upsilon_N \subset  \Psi$ if the set
$\{\, \theta \in \Theta \colon \, \exists \, n(\theta) \in \natural\, 
\forall \, N \geq n(\theta); \, \Upsilon_N(\theta) \subset \Psi \,\}$
is contained in $\cA$ and occurs \wpone,
that is, if the limit inferior of the sequence
$
(\{\, \theta \in \Theta \colon \,\Upsilon_N(\theta) \subset \Psi \,\})_N
$
is contained in $\cA$ and occurs \wpone
\cite[p.\ 55]{Billingsley2012}.
A mapping $\upsilon : \Theta \to \Lambda$ is strongly measurable
if there exists a sequence of simple mappings
$\upsilon_k : \Theta \to \Lambda$ such that
$\upsilon_k(\theta) \to  \upsilon(\theta)$ as $k \to \infty$
for all $\theta \in \Theta$
\cite[Def.\ 1.1.4]{Hytoenen2016}.
If $\Lambda$ is separable, then
$\upsilon : \Theta \to \Lambda$ is strongly measurable
if and only if it is measurable
\cite[Cor.\ 1.1.2 and Thm.\ 1.1.6]{Hytoenen2016}.
The dual to a Banach space $\Lambda$ is 
$\Lambda^*$ and the norm of $\Lambda$ is denoted 
by $\norm[\Lambda]{\cdot}$. We use $\dualp[\Lambda]{\cdot}{\cdot}$ 
to denote 
the dual pairing between $\Lambda^*$ and $\Lambda$.
If $\Lambda$ is a reflexive Banach space, 
we identify $(\Lambda^*)^*$ with $\Lambda$ and write
$(\Lambda^*)^* = \Lambda$.
Let $\Lambda_1$ and $\Lambda_2$ be real Banach spaces.
A linear operator $\Upsilon : \Lambda_1 \to \Lambda_2$ is compact if
the image $\Upsilon(\Lambda_0)$ is 
precompact in $\Lambda_2$
for each bounded set $\Lambda_0 \subset \Lambda_1$
\cite[Def.\ 8.1-1]{Kreyszig1978}.
The operator
$\Upsilon^* \colon \Lambda_2^* \to \Lambda_1^*$  is the 
(Banach space-)adjoint operator
of the linear, bounded mapping $\Upsilon \colon \Lambda_1 \to \Lambda_2$ and
is defined by
$\langle \Upsilon^*v_2, v_1\rangle_{\Lambda_1^*,\Lambda_1}
= \langle  v_2, \Upsilon v_1\rangle_{\Lambda_2^*,\Lambda_2}$
\cite[Def.\ 4.5-1]{Kreyszig1978}.
We use
$\Lambda_1 \embedding \Lambda_2$  to denote a continuous embedding
from $\Lambda_1$ to $\Lambda_2$, that is, $\Lambda_1 \subset \Lambda_2$ and the 
embedding 
operator $\iota : \Lambda_1 \to \Lambda_2$ defined by $\iota[v] = v$
is continuous  \cite[Def.\ 7.15 and Rem.\ 7.17]{Renardy2004}.
A continuous embedding is compact if the embedding operator
is a compact operator \cite[Def.\ 7.25 and Lem.\ 8.75]{Renardy2004}.
We denote by $\Du \obj$ the \frechet\ derivative
of $\obj$, and use the notation $\Du_x \obj$ and $\obj_x$
for partial derivatives with respect to $x$. 
Throughout the text,  $\domain \subset \real^d$ is a bounded domain.
For $p \in [1,\infty)$, 
we denote by $L^p(\domain)$ the Lebesgue space of $p$-integrable
functions defined on $\domain$ and
$L^\infty(\domain)$ that of essentially bounded functions.
The space $H^1(\domain)$ is the space of all $v \in L^2(\domain)$
with weak derivatives contained in $L^2(\domain)^d$,
where $L^2(\domain)^d$ is the Cartesian product of $L^2(\domain)$ taken
$d$ times.
We equip $H^1(\domain)$ with the norm
$\norm[H^1(\domain)]{y} = 
(\norm[L^2(\domain)]{y}^2 + \norm[L^2(\domain)^d]{\nabla y}^2)^{1/2}$.
The Hilbert space $H_0^1(\domain)$
consists of all $v \in H^1(\domain)$
with zero boundary traces and is equipped with the norm
$\norm[H_0^1(\domain)]{y} = \norm[L^2(\domain)^d]{\nabla y}$.
We define $H^{-1}(\domain) = H_0^1(\domain)^*$.
We define \friedrichs' constant $C_\domain \in (0,\infty)$ by
$
C_{D} = \sup_{v \in H_0^1(\domain) \setminus \{0\}}\,
\norm[L^2(\domain)]{v}/\norm[H_0^1(\domain)]{v}
$.
The indicator function $I_{\csp_0} : \csp \to [0,\infty]$ of 
$\csp_0\subset \csp$ is given
by $I_{\csp_0}(v) = 0$ if $v \in \csp_0$ and $I_{\csp_0}(v) = \infty$
otherwise. For a convex, lower semicontinuous, 
proper function $\chi : \csp \to (-\infty,\infty]$, 
the proximity operator $\prox{\chi}{}:\csp \to \csp$ of $\chi$ is defined by
(see \cite[Def.\ 12.23]{Bauschke2011})
\begin{align*}
	\prox{\chi}{v}
	= \argmin_{w\in\csp}\, 
	\chi(w) + (1/2)\norm[\csp]{v-w}^2.
\end{align*}

\section{Risk-neutral PDE-constrained optimization problem}
\label{sec:assumptions}

We consider the risk-neutral PDE-constrained optimization problem
\begin{align}
	\label{eq:ocp}
	\min_{u\in\csp}\, 
	\cE{\pobj_1(S(u,\xi),\xi)} + \psi(u) +
	(\alpha/2)\norm[\csp]{u}^2\,
\end{align}
and its sample average approximation 
\begin{align}
	\label{eq:saa}
	\min_{u\in\csp}\, 
	\frac{1}{N} \sum_{i=1}^N\pobj_1(S(u,\xi^i),\xi^i)
	+ \psi(u) +
	(\alpha/2)\norm[\csp]{u}^2,
\end{align}
where $\alpha > 0$, and $\xi^1$, $\xi^2, \ldots$ are independent 
identically distributed $\Xi$-valued
random elements defined on a complete
probability space $(\Omega, \cF, P)$
and each $\xi^i$ has the same distribution 
as that of the random element $\xi$. Here, $\xi$ maps
from a probability space to a complete probability space
with sample space $\Xi$ being a complete, separable metric space.
Moreover, $\pobj_1 \colon \ssp \times \Xi \to [0,\infty)$, 
$\psi \colon \csp \to [0,\infty]$, and
$S : \csp \times \Xi \to \ssp$. We state assumptions
on these mappings and on
the control space $\csp$ and state space $\ssp$ in \Cref{ass:pobj,ass:E}.
Let $\erpobj : \csp \to [0,\infty]$ be the objective function of \eqref{eq:ocp} 
and let $\hat{\erpobj}_N : \csp \to [0,\infty]$ be that of \eqref{eq:saa}. 
Since $\xi^1, \xi^2, \ldots$ are defined on the common probability space
$(\Omega,\cF,P)$, we can view the function $\hat{\erpobj}_N$ as defined
on $\csp \times \Omega$. However, we often omit writing the second argument.
We often use $\xi$ to denote a deterministic element in $\Xi$.

In the remainder of the section, we impose conditions
on the optimization problem \eqref{eq:ocp}.
\Cref{ass:pobj,ass:E} ensure that the reduced formulation of the
risk-neutral problem \eqref{eq:ocp}
and its SAA problem \eqref{eq:saa} are well-defined.

\begin{assumption}
	\label{ass:pobj}
	\begin{enumthm}[nosep,leftmargin=*]
		\item The space $\csp$ is a real, separable Hilbert space, 
		and $\ssp$ is a real, separable Banach space.
		
		\item 
		\label{ass:pobj_2}
		The function $\pobj_1 : \ssp  \times  \Xi \to [0,\infty)$ 
		is a \Caratheodory\ function, 
		and $\pobj_1(\cdot,  \xi)$ is continuously differentiable
		for all $\xi \in \Xi$.
		\item  
		\label{ass:pobj_3}
		The regularization parameter $\alpha$ is positive,
		and $\psi : \csp \to [0,\infty]$
		is proper, convex and lower semicontinuous.
	\end{enumthm}
\end{assumption}%
The nonnegativity of $\pobj_1$
and  $\psi$ is fulfilled for many
PDE-constrained optimization problems
(see \cref{sec:applications}). 
We define the feasible set 
\begin{align}
	\label{eq:adcsp}
	\adcsp = \{ \, u \in \csp :\, \psi(u)<\infty\,\}.
\end{align}%  

\begin{assumption}
	\label{ass:E}
	\begin{enumthm}[nosep,leftmargin=*]
		\item The operator $E : (\ssp \times \csp) \times \Xi \to \asp$
		is a  \Caratheodory\ mapping, 
		$E(\cdot,\cdot, \xi)$ is continuously differentiable
		for all $\xi \in \Xi$, 
		and $\asp$ is a real, separable Banach space. 
		\item 	For each $(u,\xi) \in \csp \times \Xi$, 
		$S(u,\xi) \in \ssp$ is the unique solution 
		to: find $y \in \ssp$ with
		$E(y,u,\xi) = 0$.
		\item For each $(u,\xi) \in \csp \times \Xi$, 
		$E_y(S(u,\xi),u,\xi)$ has a bounded inverse.
	\end{enumthm}
\end{assumption}
\Cref{ass:pobj,ass:E} and the implicit
function theorem 
ensure that
$S(\cdot, \xi) $ is continuously differentiable on 
$\csp$ for each $\xi \in \Xi$.
Let us define 
$\rpobj_1: \csp \times  \Xi \to [0,\infty)$ by
\begin{align}
	\label{eq:rpobj1}
	\rpobj_1(u,\xi) = \pobj_1(S(u,\xi),\xi)
\end{align}
and  $\rpobj : \csp \times \Xi \to [0,\infty)$ by
\begin{align}
	\label{eq:rpobj}
	\rpobj(u,\xi) = \rpobj_1(u,\xi) + \psi(u) + (\alpha/2)\norm[\csp]{u}^2.
\end{align}
Let us fix $\xi \in \Xi$. 
\Cref{ass:pobj,ass:E}  allow us to use
the adjoint approach \cite[Sect.\ 1.6.2]{Hinze2009} 
to compute the gradient of 
the function
$\rpobj_1(\cdot,\xi)$ defined in \eqref{eq:rpobj1} at each $u \in \csp$.
It yields the gradient
\begin{align}
	\label{eq:Euz}
	\nabla_u \rpobj_1(u,\xi) = E_u(S(u,\xi),u,\xi)^*z(u,\xi),
\end{align}
where for each $(u,\xi) \in \csp \times \Xi$,
$z(u,\xi) \in \asp^*$ is the unique solution to the
(parameterized) adjoint equation: find $z \in \asp^*$
with 
\begin{align}
	\label{eq:aeq}
	E_y(S(u,\xi),u,\xi)^*z = - \Du_y \pobj_1(S(u,\xi),\xi).
\end{align}%

\begin{assumption}
	\label{ass:existence}
	The risk-neutral problem \eqref{eq:ocp} has a solution.
	For each $N \in \natural$
	and every $\omega \in \Omega$, 
	the SAA problem \eqref{eq:saa} has a solution.
	%	\end{enumthm}
\end{assumption}%
We refer the reader to 
\cite[Thm.\ 1]{Kouri2018}
and
\cite[Prop.\ 3.12]{Kouri2018a}
for theorems on the existence of solutions to 
risk-averse PDE-constrained optimization problems.

For some $u_0 \in \adcsp$  
with $\cE{\rpobj(u_0,\xi)} < \infty$ and a scalar $\rho \in  (0,\infty)$,
we define the  set
$$\vadcsp^\rho(u_0) = \{\, u \in \adcsp \colon\, (\alpha/2) \norm[\csp]{u}^2 \leq 
\cE{\rpobj(u_0,\xi)} + \rho \,\}.$$
The existence of such a point $u_0$ is implied
by \Cref{ass:existence}, for example. Whereas $\adcsp$ may be unbounded, 
the set $\vadcsp^\rho(u_0)$ is bounded. If $\adcsp$ is bounded and 
$\rho \in (0,\infty)$ is sufficiently large, then $\vadcsp^\rho(u_0) = \adcsp$.
Each solution 
to the risk-neutral problem \eqref{eq:ocp} is contained
in $\vadcsp^\rho(u_0)$, because $\rpobj_1 \geq 0$, $\psi \geq 0$, 
and $u_0 \in \adcsp$.

\Cref{ass:pobjE} allows us to construct  compact subsets
of the bounded set $\vadcsp^\rho(u_0)$. 

\begin{assumption}
\label{ass:pobjE}
\begin{enumthm}[nosep,leftmargin=*]
	\item 
	\label{ass:pobjE_operator}
	The linear operator 
	$\coperator: \bsp \to \csp$
	is compact,  $\bsp$ is a real, separable Banach space,
	and $B_{\vadcsp^\rho(u_0)} \subset \csp$ is 
	a bounded, convex
	neighborhood of $\vadcsp^\rho(u_0)$.
	\item 
	\label{ass:pobjE_gradient}
	The mapping 
	$M :\csp 
	\times \Xi \to \bsp$ is a \Caratheodory\ mapping
	and for all $(u,\xi) \in \csp \times \Xi$,
	\begin{align}
		\label{eq:lifted_rpobj_1}
		\nabla_u
		\rpobj_1(u,\xi) = \coperator[M(u,\xi)].
	\end{align}

	\item 
	\label{ass:pobjE_stability}
	For some integrable random variable
	$\zeta : \Xi \to [0,\infty)$, 
	\begin{align}
		\label{eq:bound_Euz}
		\norm[\bsp]{M(u,\xi)}
		\leq 		\zeta(\xi)
		\quad \text{for all} \quad (u,\xi)\in B_{\vadcsp^\rho(u_0)} \times \Xi. 
	\end{align}
\end{enumthm}
\end{assumption}
\Cref{ass:pobjE_gradient} and the gradient formula in \eqref{eq:Euz} yield
for all $(u,\xi) \in \csp \times \Xi$,
\begin{align*}
E_u(S(u,\xi),u,\xi)^*z(u,\xi) = \coperator[M(u,\xi)].
\end{align*}
\Cref{ass:pobjE_stability} may be verified using
stability estimates for the solution operator and adjoint state.
If $B_{\vadcsp^\rho(u_0)}$ would be unbounded, then 
\Cref{ass:pobjE_stability} may be violated.

\begin{lemma}
\label{lem:rpobj1islsc}
If \Cref{ass:pobj,ass:E} hold,
then 	$\rpobj_1 : \csp \times \Xi \to [0,\infty)$ is a
\Caratheodory\ mapping.
\end{lemma}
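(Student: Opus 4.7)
My plan is to verify the two clauses of the definition of a Carathéodory mapping for $\rpobj_1$: continuity of $\rpobj_1(\cdot,\xi)$ on $\csp$ for each $\xi \in \Xi$, and measurability of $\rpobj_1(u,\cdot)$ for each $u \in \csp$. Continuity reduces immediately to known facts; the real work is in the measurability.

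For continuity in $u$ at a fixed $\xi \in \Xi$, I would invoke the discussion following \Cref{ass:existence}: under \Cref{ass:pobj,ass:E}, the implicit function theorem applied to $E(\cdot,\cdot,\xi)$ yields that $S(\cdot,\xi) : \csp \to \ssp$ is continuously differentiable. Combined with the continuous differentiability of $\pobj_1(\cdot,\xi)$ from \Cref{ass:pobj_2}, the chain rule gives that $\rpobj_1(\cdot,\xi) = \pobj_1(S(\cdot,\xi),\xi)$ is continuous.

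For measurability in $\xi$ at a fixed $u \in \csp$, I would first observe that, since $\pobj_1$ is a Carathéodory mapping and $\ssp$ is separable, the composition $\xi \mapsto \pobj_1(S(u,\xi),\xi)$ is measurable provided $\xi \mapsto S(u,\xi)$ is measurable (a standard property of Carathéodory maps on separable metric spaces). Thus the task reduces to showing that $\xi \mapsto S(u,\xi)$ is a measurable mapping into $\ssp$. Here I would fix $u$ and introduce $G_u : \ssp \times \Xi \to \asp$ by $G_u(y,\xi) = E(y,u,\xi)$. By the continuous differentiability of $E(\cdot,\cdot,\xi)$ in \Cref{ass:E}(a), $G_u(\cdot,\xi)$ is continuous, and by \Cref{ass:E}(a) it is measurable in $\xi$; hence $G_u$ is Carathéodory. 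By \Cref{ass:E}(b), for each $\xi$, $S(u,\xi)$ is the unique element of $\ssp$ with $G_u(S(u,\xi),\xi) = 0$, and by \Cref{ass:E}(c), $E_y(S(u,\xi),u,\xi)$ is invertible so this zero is isolated. I would then invoke a measurable implicit function theorem (for instance Filippov's, which applies to Carathéodory maps into separable Banach spaces whose zero set is a singleton) to conclude that $\xi \mapsto S(u,\xi)$ is measurable.

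The main obstacle is the measurability of the implicit solution map $\xi \mapsto S(u,\xi)$: it is not defined by a closed-form expression but only as the unique zero of $G_u(\cdot,\xi)$, so the argument hinges on a measurable selection or measurable implicit function theorem, which in turn requires the separability of $\ssp$ and $\asp$ from \Cref{ass:pobj}(a) and \Cref{ass:E}(a). If Filippov's theorem is not in the desired form, a direct fallback is to exhibit $S(u,\cdot)$ as the pointwise limit of a sequence of measurable approximations constructed from a countable dense set $\{y_k\} \subset \ssp$ and the continuous map $y \mapsto \norm[\asp]{G_u(y,\xi)}$, using uniqueness of the zero and local invertibility of $E_y$ at $S(u,\xi)$ to ensure that the approximations indeed converge to $S(u,\xi)$; pointwise limits of measurable mappings into a separable Banach space are measurable, which would then complete the argument.
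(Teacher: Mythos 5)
Your proposal is correct and follows essentially the same route as the paper: continuity in $u$ via the implicit function theorem applied to $E(\cdot,\cdot,\xi)$, and measurability in $\xi$ by establishing measurability of $S(u,\cdot)$ through a measurable implicit function theorem for the Carath\'eodory map $(y,\xi)\mapsto E(y,u,\xi)$ with a unique zero, followed by the composition rule for Carath\'eodory maps on separable spaces. The paper cites \cite[Thm.\ 8.2.9]{Aubin2009} where you invoke Filippov's theorem, but these are the same family of measurable implicit-function results, so the arguments coincide in substance.
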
%
\begin{proof}
For each $\xi \in \Xi$, the implicit function
theorem when combined with 
\Cref{ass:pobj,ass:E}
ensures that the mappings $S(\cdot,\xi)$ is continuously
differentiable. In particular, $\rpobj_1(\cdot,\xi)$
is continuous. 	Fix  $u \in \csp$.
The measurability of $S(u,\cdot)$
follows from \cite[Thm.\ 8.2.9]{Aubin2009}
when combined with 	\Cref{ass:pobj,ass:E}.
Using the definition of $\rpobj_1$ provided in \eqref{eq:rpobj1}, 
the measurability of $\pobj_1(u,\cdot)$ and of
$S(u,\cdot)$, the separability of $\ssp$, and the composition rule
\cite[Cor.\ 1.1.11]{Hytoenen2016}, we find that
$\rpobj_1 (u,\cdot)$ is measurable. 
\end{proof}

We define the expectation function
$\erpobj_1 : B_{\vadcsp^\rho(u_0)} \to \real$ 
and the sample average function
$\hat{\erpobj}_{1,N} : B_{\vadcsp^\rho(u_0)} \to \real$
by
\begin{align}
\label{eq:erpobj_1}
\erpobj_1(u) = \cE{\rpobj_1(u,\xi)}
\quad \text{and} \quad 
\hat{\erpobj}_{1,N}(u) = 
\frac{1}{N} \sum_{i=1}^N\rpobj_1(u,\xi^i).
\end{align}
\begin{lemma}
\label{eq:erpobj1_smooth}
If \Cref{ass:pobj,ass:E,ass:existence,ass:pobjE} hold, then
$\erpobj_1$ 
and 
$\hat{\erpobj}_{1,N}$ 
are continuously differentiable
on $B_{\vadcsp^\rho(u_0)}$
and for each $u \in B_{\vadcsp^\rho(u_0)}$, we have
$\nabla \erpobj_1(u) = \cE{\nabla_u \rpobj_1(u, \xi)}$
and 
$\nabla \hat{\erpobj}_{1,N}(u) = (1/N)\sum_{i=1}^N
\nabla_u \rpobj_1(u, \xi^i)$.
\end{lemma}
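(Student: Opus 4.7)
The claim splits into two parts: for the SAA average $\hat{\erpobj}_{1,N}$ the conclusion is immediate, while for the expectation $\erpobj_1$ I must justify interchanging differentiation with the expectation. \Cref{lem:rpobj1islsc} combined with the implicit function theorem (already invoked there) shows that $\rpobj_1(\cdot,\xi)$ is continuously differentiable on $\csp$ for every $\xi \in \Xi$; hence $\hat{\erpobj}_{1,N}$, being a finite sum of $C^1$ mappings, is $C^1$ on $\csp$ with the stated gradient formula.

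For $\erpobj_1$, the plan is to apply a Leibniz rule on the convex neighborhood $B_{\vadcsp^\rho(u_0)}$. The starting point is Bochner integrability of the $\csp$-valued map $\xi \mapsto \nabla_u \rpobj_1(u,\xi) = \coperator[M(u,\xi)]$: this map is strongly measurable, since $M(u,\cdot)$ is measurable by the \Caratheodory\ property of $M$, $\bsp$ is separable, and $\coperator$ is bounded and linear; and the norm bound
\begin{align*}
	\norm[\csp]{\nabla_u \rpobj_1(u,\xi)}
	\leq \norm[\spL{\bsp}{\csp}]{\coperator}\, \zeta(\xi),
\end{align*}
combined with integrability of $\zeta$ supplied by \Cref{ass:pobjE_stability}, yields Bochner integrability uniformly in $u \in B_{\vadcsp^\rho(u_0)}$. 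For $u$ in the interior of $B_{\vadcsp^\rho(u_0)}$ and a direction $h \in \csp$, the scalar function $t \mapsto \rpobj_1(u + th,\xi)$ is continuously differentiable near $t=0$, and the mean value theorem bounds its difference quotients by $\norm[\spL{\bsp}{\csp}]{\coperator}\,\zeta(\xi)\,\norm[\csp]{h}$. Dominated convergence then permits passage to the limit $t \to 0$ inside the expectation and identifies the \gateaux\ derivative of $\erpobj_1$ at $u$ along $h$ with $\cE{\langle \nabla_u \rpobj_1(u,\xi), h\rangle_\csp}$, which by the defining property of the Bochner integral equals $\langle \cE{\nabla_u \rpobj_1(u,\xi)}, h\rangle_\csp$. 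Continuity of $u \mapsto \cE{\nabla_u \rpobj_1(u,\xi)}$ then follows from a second dominated convergence argument using continuity of $\nabla_u \rpobj_1(\cdot,\xi)$ (from the \Caratheodory\ property of $M$ and boundedness of $\coperator$) with the same dominating function; a continuous \gateaux\ derivative then upgrades to a \frechet\ derivative, so $\erpobj_1 \in C^1(B_{\vadcsp^\rho(u_0)})$.

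The main subtlety I expect is the handling of the Bochner integral: one must verify strong measurability of the $\csp$-valued gradient map -- which works because $\csp$ is separable and $M$ is \Caratheodory\ -- and then carefully distinguish the scalar \gateaux\ argument from its Hilbert space reformulation in terms of $\cE{\nabla_u \rpobj_1(u,\xi)}$. A secondary issue is that $B_{\vadcsp^\rho(u_0)}$ is only called a bounded, convex neighborhood and not explicitly open; continuous differentiability on $B_{\vadcsp^\rho(u_0)}$ is to be read via its interior together with a continuous extension of the gradient, which the chord estimate supplied by convexity makes routine.
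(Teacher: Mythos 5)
Your argument is essentially the paper's: the proof in the text also reduces everything to the domination $\norm[\csp]{\nabla_u\rpobj_1(u,\xi)}\leq C_K\zeta(\xi)$ on $B_{\vadcsp^\rho(u_0)}$ supplied by \Cref{ass:pobjE}, but it outsources the interchange of differentiation and expectation to \cite[Lem.\ C.3]{Geiersbach2020} and then obtains continuity of the gradients by dominated convergence — exactly the content you reproduce by hand via Bochner integrability, the mean value theorem, and the \gateaux-to-\frechet\ upgrade. The one step you omit, and which the paper's proof does first, is verifying that $\erpobj_1$ is actually real-valued on $B_{\vadcsp^\rho(u_0)}$; this follows from $\cE{\rpobj_1(u_0,\xi)}<\infty$ (a consequence of $\cE{\rpobj(u_0,\xi)}<\infty$ and $\psi,\rpobj_1\geq 0$) together with the chord estimate $\rpobj_1(u,\xi)\leq\rpobj_1(u_0,\xi)+C_K\zeta(\xi)\norm[\csp]{u-u_0}$ of \Cref{lem:rpobj_lipschitz}, i.e.\ the same mean-value bound you already use for the difference quotients, just anchored at $u_0$.
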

We prove \Cref{eq:erpobj1_smooth} using 
\Cref{lem:rpobj_lipschitz}.
\begin{lemma}
\label{lem:rpobj_lipschitz}
If \Cref{ass:pobj,ass:E,ass:pobjE} hold,
then for all $\xi \in \Xi$, the function
$\rpobj_1(\cdot, \xi)$
is continuously differentiable on $\csp$, 
and for all  $u \in  B_{\vadcsp^\rho(u_0)}$, we have
$$
\rpobj_1(u,\xi)
\leq \rpobj_1(u_0,\xi) + C_K\zeta(\xi)
\norm[\csp]{u-u_0},
$$
where 
$C_K \in [0,\infty)$ is the 
operator norm of $\coperator$.
\end{lemma}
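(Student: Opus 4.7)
The plan is to handle continuous differentiability via the implicit function theorem and chain rule, and then to obtain the norm estimate by applying the fundamental theorem of calculus along the line segment from $u_0$ to $u$, using convexity of $B_{\vadcsp^\rho(u_0)}$ to keep that segment in the region where \eqref{eq:bound_Euz} applies.

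For fixed $\xi \in \Xi$, \Cref{ass:pobj,ass:E} combined with the implicit function theorem yield continuous differentiability of $S(\cdot,\xi)$ on $\csp$, while $\pobj_1(\cdot,\xi)$ is continuously differentiable on $\ssp$ by \Cref{ass:pobj}. The chain rule then gives continuous differentiability of $\rpobj_1(\cdot,\xi) = \pobj_1(S(\cdot,\xi),\xi)$, with gradient captured by the adjoint formula \eqref{eq:Euz}. Next, I would verify that $u_0 \in \vadcsp^\rho(u_0) \subset B_{\vadcsp^\rho(u_0)}$: because $\rpobj_1 \geq 0$ and $\psi \geq 0$, the definition of $\rpobj$ in \eqref{eq:rpobj} gives $\cE{\rpobj(u_0,\xi)} \geq (\alpha/2)\norm[\csp]{u_0}^2$, so $u_0$ satisfies the level constraint defining $\vadcsp^\rho(u_0)$.

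By convexity of $B_{\vadcsp^\rho(u_0)}$ from \Cref{ass:pobjE}, the segment $u_t := u_0 + t(u - u_0)$ stays in $B_{\vadcsp^\rho(u_0)}$ for every $t \in [0,1]$, so the fundamental theorem of calculus applied to $t \mapsto \rpobj_1(u_t,\xi)$ yields
\begin{align*}
  \rpobj_1(u,\xi) - \rpobj_1(u_0,\xi) = \int_0^1 \dualp[\csp]{\nabla_u \rpobj_1(u_t,\xi)}{u - u_0}\,\du t.
\end{align*}
Rewriting the gradient via \eqref{eq:lifted_rpobj_1} as $\nabla_u \rpobj_1(u_t,\xi) = \coperator[M(u_t,\xi)]$, and combining Cauchy--Schwarz, the definition of the operator norm $C_K$, and the stability bound \eqref{eq:bound_Euz} applied at $u_t \in B_{\vadcsp^\rho(u_0)}$ gives
\begin{align*}
  |\dualp[\csp]{\nabla_u \rpobj_1(u_t,\xi)}{u - u_0}| \leq C_K \norm[\bsp]{M(u_t,\xi)} \norm[\csp]{u-u_0} \leq C_K \zeta(\xi) \norm[\csp]{u-u_0}.
\end{align*}
Integrating over $t \in [0,1]$ produces the claimed inequality.

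The main obstacle, and really the only non-routine point, is the bookkeeping verification that the whole segment $\{u_t\}_{t \in [0,1]}$ lies in $B_{\vadcsp^\rho(u_0)}$ so that both the convex-neighborhood structure and the uniform bound \eqref{eq:bound_Euz} apply throughout; this hinges on the combination of convexity of $B_{\vadcsp^\rho(u_0)}$ and the containment $u_0 \in \vadcsp^\rho(u_0)$, which itself rests on the nonnegativity of $\rpobj_1$ and $\psi$.
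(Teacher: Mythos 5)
Your proposal is correct and follows essentially the same route as the paper's proof: continuous differentiability via the implicit function theorem, the containment $u_0 \in \vadcsp^\rho(u_0) \subset B_{\vadcsp^\rho(u_0)}$ from nonnegativity of $\rpobj_1$ and $\psi$, convexity of the neighborhood to keep the segment inside, and the factorization $\nabla_u\rpobj_1 = \coperator[M(\cdot,\xi)]$ together with \eqref{eq:bound_Euz}. The only cosmetic difference is that you integrate the derivative along the segment where the paper invokes the mean-value inequality with a supremum bound.
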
%
\begin{proof}
Since $\coperator$ is linear and compact, it is bounded
\cite[Lem.\ 8.1-2]{Kreyszig1978}. Hence $C_K$ is finite.
For each $\xi \in \Xi$, $\rpobj_1(\cdot, \xi)$
is continuously differentiable on $\csp$
owing to the implicit function theorem
and \Cref{ass:pobj,ass:E}.
Since $\psi \geq 0$ and $\rpobj_1 \geq 0$, 
we have $u_0 \in \vadcsp^\rho(u_0)$. 
Using the mean-value theorem,
the convexity of $B_{\vadcsp^\rho(u_0)}$, 
$u$, $u_0 \in B_{\vadcsp^\rho(u_0)}$,
the formula \eqref{eq:lifted_rpobj_1}, 
and the estimate \eqref{eq:bound_Euz},  we obtain 
\begin{align*}
	\rpobj_1(u,\xi)-\rpobj_1(u_0,\xi)
	&\leq \sup_{t\in(0,1)}\, 
	\norm[\csp]{\nabla_u\rpobj_1(u_0+t(u-u_0),\xi)}\norm[\csp]{u-u_0}
	\\&\leq 
	C_K \zeta(\xi)
	\norm[\csp]{u-u_0}.
\end{align*}
\end{proof}%
\begin{proof}[{Proof of \Cref{eq:erpobj1_smooth}}]
Owing to $\psi(u_0) \in [0,\infty)$, $\cE{\rpobj(u_0,\xi)} < \infty$,
and $\rpobj_1 \geq 0$, we have $\cE{\rpobj_1(u_0,\xi)} \in [0,\infty)$.
Combined with  \Cref{lem:rpobj_lipschitz} and
$\cE{\zeta(\xi)} < \infty$, 
we find that $\erpobj_1$ is well-defined on
the open set $B_{\vadcsp^\rho(u_0)}$.
Moreover, $\rpobj_1(\cdot,\xi)$ is continuously differentiable
on $\csp$
for all $\xi \in \Xi$.
Combined with \Cref{ass:pobjE}
and \cite[Lem.\ C.3]{Geiersbach2020}, we find that
$\erpobj_1$ and $\hat{\erpobj}_{1,N}$ are \frechet\ differentiable
on $B_{\vadcsp^\rho(u_0)}$
with the asserted derivatives.
Using \Cref{ass:pobjE}
and the dominated convergence theorem, 
we obtain the  continuity of the \frechet\ derivatives
on $B_{\vadcsp^\rho(u_0)}$.
\end{proof}

\subsection{Compact Subsets}
\label{sec:compact_subsets}

We define a compact subset of the feasible set $\adcsp$
that contains the solutions
to the risk-neutral problem \eqref{eq:ocp} and eventually
those to its SAA problem
\eqref{eq:saa}.

Let us define 
\begin{align}
\label{eq:wad}
\wadcsp^\rho =
\vadcsp^\rho(u_0) \cap
\overline{\{\,\prox{\psi/\alpha}{-(1/\alpha) \coperator[v]} :\, \,
	v \in \bsp,\, 
	\norm[\bsp]{v} \leq \cE{\zeta(\xi)} + \rho
	\,\}}^{\norm[\csp]{\cdot}},
\end{align}
where $\overline{\csp_0}^{\norm[\csp]{\cdot}}$ denotes the 
$\norm[\csp]{\cdot}$-closure of $\csp_0 \subset \csp$.
\begin{lemma}
\label{lem:wadprecompact}
If \Cref{ass:pobj,ass:E,ass:pobjE} hold,
then  $\wadcsp^\rho$  is a compact subset of $\adcsp$.
\end{lemma}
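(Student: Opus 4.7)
The strategy is to introduce $P = \{\,\prox{\psi/\alpha}{-(1/\alpha)\coperator[v]} : v \in \bsp,\, \norm[\bsp]{v} \leq \cE{\zeta(\xi)} + \rho\,\}$, so that by \eqref{eq:wad} we have $\wadcsp^\rho = \vadcsp^\rho(u_0) \cap \overline{P}^{\norm[\csp]{\cdot}}$. The plan is to (i) show that $\overline{P}^{\norm[\csp]{\cdot}}$ is norm-compact in $\csp$, (ii) show that this closure in fact already lies in $\adcsp$, and (iii) conclude compactness of $\wadcsp^\rho$ by rewriting it as the intersection of a closed norm ball with the compact set $\overline{P}^{\norm[\csp]{\cdot}}$.

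For step (i), I would note that the set $\{\,v \in \bsp : \norm[\bsp]{v} \leq \cE{\zeta(\xi)} + \rho\,\}$ is bounded in $\bsp$, hence its image under the compact operator $\coperator$ (see \Cref{ass:pobjE_operator}) is precompact in $\csp$. Since $\psi/\alpha$ is proper, convex, and lower semicontinuous by \Cref{ass:pobj_3}, the proximity operator $\prox{\psi/\alpha}{\cdot}$ is firmly nonexpansive on $\csp$, in particular Lipschitz, so it maps precompact sets to precompact sets. Thus $P$ is precompact in $\csp$, and $\overline{P}^{\norm[\csp]{\cdot}}$ is compact.

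Step (ii) is where the main subtlety lies, because $\adcsp$ itself need not be closed. Let $u^\ast \in \overline{P}^{\norm[\csp]{\cdot}}$ and pick a sequence $u_n = \prox{\psi/\alpha}{-(1/\alpha)\coperator[v_n]} \to u^\ast$ in $\csp$ with $\norm[\bsp]{v_n} \leq \cE{\zeta(\xi)} + \rho$. The compactness of $\coperator$ furnishes a subsequence along which $\coperator[v_n] \to w$ in $\csp$ for some $w \in \csp$, and the continuity of $\prox{\psi/\alpha}{\cdot}$ then yields $u^\ast = \prox{\psi/\alpha}{-w/\alpha}$. Because the proximity operator of a proper, convex, lower semicontinuous function always takes values in its effective domain, $u^\ast \in \mathrm{dom}(\psi) = \adcsp$; hence $\overline{P}^{\norm[\csp]{\cdot}} \subset \adcsp$.

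For step (iii), using $\overline{P}^{\norm[\csp]{\cdot}} \subset \adcsp$ I can rewrite $\wadcsp^\rho = \{\,u \in \csp : (\alpha/2)\norm[\csp]{u}^2 \leq \cE{\rpobj(u_0,\xi)} + \rho\,\} \cap \overline{P}^{\norm[\csp]{\cdot}}$, dropping the membership condition $u \in \adcsp$ inside $\vadcsp^\rho(u_0)$. This expresses $\wadcsp^\rho$ as the intersection of a closed norm ball with a compact set, which is compact; and the inclusion $\wadcsp^\rho \subset \overline{P}^{\norm[\csp]{\cdot}} \subset \adcsp$ completes the proof. The principal hurdle is step (ii): without using both the compactness of $\coperator$ and the continuity of the proximity operator to realize limit points of $P$ as values of $\prox{\psi/\alpha}{\cdot}$, one cannot directly place them into the a priori non-closed set $\adcsp$.
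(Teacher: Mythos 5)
Your proof is correct and follows essentially the same route as the paper: compactness of $\overline{P}^{\norm[\csp]{\cdot}}$ via the compact operator $\coperator$ and the (firmly nonexpansive, hence continuous) proximity operator, inclusion in $\adcsp$ via $\prox{\psi/\alpha}{\csp}\subset\adcsp$, and then intersection with $\vadcsp^\rho(u_0)$. Your steps (ii) and (iii) are in fact somewhat more careful than the paper's, which simply asserts that the closure lies in $\adcsp$ and that $\vadcsp^\rho(u_0)$ is closed; your subsequence argument realizing limit points of $P$ as prox values, and your rewriting of $\wadcsp^\rho$ as (closed ball) $\cap\, \overline{P}^{\norm[\csp]{\cdot}}$, correctly handle the case where $\adcsp$ is not closed.
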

\begin{proof}
We first show that the second set on the right-hand side
in \eqref{eq:wad} is compact.
\Cref{ass:pobjE_stability} yields
$\cE{\zeta(\xi)} < \infty$. Hence the set
$\{\, v \in \bsp : \, 
\norm[\bsp]{v} \leq \cE{\zeta(\xi)} + \rho  \,\}$
is bounded.
Thus, its image under the compact operator 
$\coperator$
(see \Cref{ass:pobjE_operator})
is precompact.
The operator 
$\prox{\psi/\alpha}{-(1/\alpha) \cdot} : \csp \to \csp$
is continuous, as $\prox{\psi/\alpha}{}$
is firmly nonexpansive  \cite[Prop.\ 12.28]{Bauschke2011}.
Since each continuous function maps precompact sets to precompact
ones \cite[p.\ 412]{Kreyszig1978}, 
the second set on the right-hand side
in \eqref{eq:wad} is compact.
This set is a subset of $\adcsp$
because $\prox{\psi/\alpha}{\csp} \subset \adcsp$.
Since $\vadcsp^\rho(u_0)$ is closed, 
the set $\wadcsp^\rho$ is compact.
Owing to $\vadcsp^\rho(u_0) \subset \adcsp$, 
we have $\wadcsp^\rho \subset \adcsp$.
\end{proof}

For each $\omega \in \Omega$, we define  
\begin{align}
\wadcsp^{[N]}(\omega) = 
\overline{\{\,\prox{\psi/\alpha}{-(1/\alpha)\nabla_u\hat{\erpobj}_{1,N}(u,\omega)}:
	u \in \vadcsp^\rho(u_0)\,\}}^{\norm[\csp]{\cdot}}.
\end{align}

\begin{lemma}
\label{lem:wadcontainssol}
Let \Cref{ass:pobj,ass:E,ass:existence,ass:pobjE} hold.
Then the following assertions hold.
\begin{enumerate}[nosep,font=\normalfont]
	\item 
	The set of solutions to  \eqref{eq:ocp} is contained
	in $\wadcsp^\rho$.
	
	\item 
	We have \wpone
	for all sufficiently large $N$,
	$\wadcsp^{[N]} \subset \wadcsp^\rho$.
\end{enumerate}
\end{lemma}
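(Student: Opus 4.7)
The plan is to combine, for both parts, the prox fixed-point form of first-order optimality with the structural identity $\nabla_u \rpobj_1(u,\xi) = \coperator[M(u,\xi)]$ from \Cref{ass:pobjE_gradient} and the uniform bound $\norm[\bsp]{M(u,\xi)} \le \zeta(\xi)$ on $B_{\vadcsp^\rho(u_0)}$ from \Cref{ass:pobjE_stability}. Part (a) is a single deterministic computation; part (b) layers on two applications of the Strong Law of Large Numbers (SLLN) to lift pointwise estimates to the sample level.

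For (a), let $u^\ast$ solve \eqref{eq:ocp}. The energy bound $(\alpha/2)\norm[\csp]{u^\ast}^2 \le \erpobj(u^\ast) \le \erpobj(u_0) = \cE{\rpobj(u_0,\xi)}$ places $u^\ast \in \vadcsp^\rho(u_0) \subset B_{\vadcsp^\rho(u_0)}$. The first-order condition $0 \in \nabla \erpobj_1(u^\ast) + \alpha u^\ast + \partial \psi(u^\ast)$ is equivalent to the prox identity $u^\ast = \prox{\psi/\alpha}{-(1/\alpha)\nabla \erpobj_1(u^\ast)}$. Using \Cref{eq:erpobj1_smooth} and the fact that the bounded linear operator $\coperator$ commutes with Bochner integration \cite[Prop.\ 1.2.2]{Hytoenen2016}, I would rewrite this as $u^\ast = \prox{\psi/\alpha}{-(1/\alpha)\coperator[\cE{M(u^\ast,\xi)}]}$, where $\norm[\bsp]{\cE{M(u^\ast,\xi)}} \le \cE{\zeta(\xi)} \le \cE{\zeta(\xi)} + \rho$ by \Cref{ass:pobjE_stability}. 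Hence $u^\ast$ belongs to both sets in the intersection \eqref{eq:wad}.

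For (b), let $\hat u_N$ be an SAA solution and set $v_N(u,\omega) := (1/N)\sum_{i=1}^N M(u,\xi^i(\omega))$, so the same prox rewriting becomes $\hat u_N = \prox{\psi/\alpha}{-(1/\alpha)\coperator[v_N(\hat u_N,\omega)]}$. First I would apply SLLN to the i.i.d.\ integrable real sequence $(\rpobj_1(u_0,\xi^i))$ (integrability from \Cref{lem:rpobj_lipschitz}), obtaining $\hat{\erpobj}_{1,N}(u_0,\omega) \to \erpobj_1(u_0)$ \wpone; combined with the optimality estimate $(\alpha/2)\norm[\csp]{\hat u_N}^2 \le \hat{\erpobj}_N(u_0,\omega)$, this yields $\hat u_N \in \vadcsp^\rho(u_0) \subset B_{\vadcsp^\rho(u_0)}$ \wpone for all sufficiently large $N$. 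Second, SLLN applied to the nonnegative integrable $\zeta(\xi^i)$ gives $(1/N)\sum_i \zeta(\xi^i(\omega)) \to \cE{\zeta(\xi)}$ \wpone; combined with the pointwise bound $\norm[\bsp]{M(\hat u_N,\xi^i)} \le \zeta(\xi^i)$ (available once $\hat u_N \in B_{\vadcsp^\rho(u_0)}$), this yields $\norm[\bsp]{v_N(\hat u_N,\omega)} \le \cE{\zeta(\xi)} + \rho$ \wpone eventually. Together these two bounds place $\hat u_N$ in $\wadcsp^\rho$ \wpone for all large $N$; taking the norm-closure preserves the inclusion because $\wadcsp^\rho$ is compact by \Cref{lem:wadprecompact}.

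The main obstacle will be the measurability bookkeeping demanded by the "\wpone for all sufficiently large $N$" convention fixed in \Cref{sec:notation}: one must verify that the relevant $\omega$-sets are in $\cF$. This is routine, since $\hat{\erpobj}_{1,N}(u_0,\cdot)$ and $(1/N)\sum_i \zeta(\xi^i)$ are measurable as finite sums of measurable random variables and $(\Omega,\cF,P)$ is complete. A secondary care point is justifying the Bochner commutation $\cE{\coperator[M(u^\ast,\xi)]} = \coperator[\cE{M(u^\ast,\xi)}]$ in (a), which follows from strong measurability of $M(u^\ast,\cdot)$ (separability of $\bsp$ plus the \Caratheodory\ property in \Cref{ass:pobjE_gradient}) together with Bochner integrability (the $\zeta$-bound).
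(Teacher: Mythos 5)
Part (a) of your proposal is essentially the paper's argument: energy bound to place $u^*$ in $\vadcsp^\rho(u_0)$, the prox fixed-point form of the first-order condition, commuting $\coperator$ with the Bochner integral, and the bound $\norm[\bsp]{\cE{M(u^*,\xi)}}\le\cE{\zeta(\xi)}<\cE{\zeta(\xi)}+\rho$. No issues there.

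Part (b) of your proposal, however, proves a different statement from the one asserted. The claim is that \wpone for all large $N$ the \emph{entire set} $\wadcsp^{[N]}(\omega)$, i.e.\ the closure of $\{\prox{\psi/\alpha}{-(1/\alpha)\nabla_u\hat{\erpobj}_{1,N}(u,\omega)} : u\in\vadcsp^\rho(u_0)\}$, is contained in $\wadcsp^\rho$. Your argument never addresses this set: it works only with SAA minimizers $\hat u_N$, and what it establishes is that $\hat{\mathscr{S}}_N\subset\wadcsp^\rho$ eventually \wpone. That is precisely \Cref{lem:sninwadcp}, which the paper proves \emph{later}, using part (b) as an ingredient (SAA solutions lying in $\vadcsp^\rho(u_0)$ are fixed points of the prox map and hence belong to $\wadcsp^{[N]}$, which by (b) sits inside $\wadcsp^\rho$). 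So as written your (b) leaves the actual assertion unproved, and your first SLLN application (to $\rpobj_1(u_0,\xi^i)$, used to get $\hat u_N\in\vadcsp^\rho(u_0)$) is not needed for (b) at all. The repair is short and already implicit in your computation: the bound $\norm[\bsp]{M(u,\xi^i(\omega))}\le\zeta(\xi^i(\omega))$ holds uniformly for all $u\in B_{\vadcsp^\rho(u_0)}\supset\vadcsp^\rho(u_0)$, so once the SLLN gives $(1/N)\sum_{i=1}^N\zeta(\xi^i(\omega))\le\cE{\zeta(\xi)}+\rho$ for $N\ge n(\omega)$, every point $\prox{\psi/\alpha}{-(1/\alpha)\coperator[(1/N)\sum_i M(u,\xi^i(\omega))]}$ with $u\in\vadcsp^\rho(u_0)$ lies in the second set on the right-hand side of \eqref{eq:wad}; closedness of $\wadcsp^\rho$ then absorbs the closure. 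You should state the argument for arbitrary $u\in\vadcsp^\rho(u_0)$, not for $\hat u_N$.
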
%
\begin{proof}
\begin{enumerate}[nosep,wide]

	\item 
	\label{itm:proof:lem:wadcontainssol}
	Let $u^*$ be a solution to
	\eqref{eq:ocp}.
	Since $\rpobj_1 \geq 0$ and $\psi \geq 0$, we have
	$u^* \in \vadcsp^\rho(u_0)$.
	\Cref{eq:erpobj1_smooth} ensures that $\erpobj_1$ is continuously
	differentiable on $B_{\vadcsp^\rho(u_0)}$.
	Hence 
	$u^* = \prox{\psi/\alpha}{-(1/\alpha)\nabla \erpobj_1(u^*)}$
	(cf.\ \cite[Prop.\ 3.5]{Pieper2015} and \cite[p.\ 2092]{Mannel2020}).
	Using \Cref{ass:pobjE}, 
	in particular the bound in \eqref{eq:bound_Euz},
	and \cite[Prop.\ 1.2.2]{Hytoenen2016}, we have
	\begin{align*}
		\norm[\bsp]{\cE{M(u^*,\xi)}}
		\leq \cE{\norm[\bsp]{M(u^*,\xi)}}
		\leq \cE{\zeta(\xi)} < \infty.
	\end{align*}
	Combined with \eqref{eq:lifted_rpobj_1}
	and \cite[eq.\ (1.2)]{Hytoenen2016}, we find that
	$$
	\nabla \erpobj_1(u^*) 
	=\cE{\coperator M(u^*,\xi)}
	=\coperator\cE{M(u^*,\xi)}.
	$$
	Since 
	$u^* = \prox{\psi/\alpha}{-(1/\alpha)\coperator\cE{M(u^*,\xi)}}$
	and $\rho > 0$,
	we have $u^* \in \wadcsp^\rho$
	(see \eqref{eq:wad}).
	\item 
	The (strong)  law of large numbers 
	ensures 
	$(1/N) \sum_{i=1}^N\zeta(\xi^i) \to \cE{\zeta(\xi)}$
	\wpone as $N \to \infty$.
	Combined with $\rho > 0$, 
	we deduce the existence of an event $\Omega_1 \in \cF$
	with $P(\Omega_1) = 1$ and
	for each $\omega\in\Omega_1$, 
	there exists $n(\omega) \in \natural$
	such that for all $N \geq n(\omega)$, we have
	\begin{align*}
		\frac{1}{N} \sum_{i=1}^N\zeta(\xi^i(\omega))
		\leq  \cE{\zeta(\xi)}+\rho.
	\end{align*}
	Fix $\omega \in \Omega_1$ and let $N \geq n(\omega)$.
	Let $u \in \vadcsp^\rho(u_0)$ be arbitrary.
	Using $\vadcsp^\rho(u_0) \subset B_{\vadcsp^\rho(u_0)}$ and
	\Cref{ass:pobjE}, we find that
	\begin{align*}
		\cnorm[\bigg]{\bsp}{\frac{1}{N}\sum_{i=1}^N M(u,\xi^i(\omega))}
		&\leq \frac{1}{N}\sum_{i=1}^N\norm[\bsp]{M(u,\xi^i(\omega))}
		\leq \frac{1}{N} \sum_{i=1}^N\zeta(\xi^i(\omega))
		\\
		&\leq \cE{\zeta(\xi)}+\rho,
	\end{align*}
	where the right-hand side is independent of $u \in \vadcsp^\rho(u_0)$.
	Furthermore
	$$\nabla \hat{\erpobj}_{1,N}(u,\omega) 
	=(1/N) \sum_{i=1}^N  \nabla_u \rpobj_1(u,\xi^i(\omega))
	=\coperator \bigg( (1/N) \sum_{i=1}^N M(u,\xi^i(\omega)) \bigg).$$
	We conclude that 
	$\prox{\psi/\alpha}{-(1/\alpha)\nabla \hat{\erpobj}_{1,N}(u,\omega)}
	\in \wadcsp^\rho$
	for each $u \in \vadcsp^\rho(u_0)$.
	Since $\wadcsp^\rho$ is closed
	(see \Cref{lem:wadprecompact}), we have
	$\wadcsp^{[N]}(\omega) \subset \wadcsp^\rho$.
	Hence
	\begin{align*}
		\Omega_1 \subset 
		\{\, \omega \in \Omega \colon \,
		\exists\, n(\omega) \in \natural 
		\quad \forall\, N \geq n(\omega);
		\quad  \wadcsp^{[N]}(\omega) \subset \wadcsp^\rho
		\,\}.
	\end{align*}
	The set on the right-hand side is a subset of $\Omega$.
	Since  $\Omega_1 \in \cF$, $P(\Omega_1) = 1$ and 
	$(\Omega, \cF, P)$ is complete, 
	the set on the right-hand side is measurable
	and hence  occurs \wpone. 
\end{enumerate}
\end{proof}

To establish the measurability of the event
``for all sufficiently large $N$, 		$\wadcsp^{[N]} \subset \wadcsp^\rho$,''
we used the fact that  $(\Omega, \cF, P)$ is complete.
Since this event equals the limit inferior of the sequence
$(\{\omega\in\Omega:\wadcsp^{[N]}(\omega) \subset \wadcsp^\rho\})_N$, 
the measurability of the event would also be implied by that of
$\{\omega\in\Omega:\wadcsp^{[N]}(\omega) \subset \wadcsp^\rho\}$
for each $N \in \natural$ \cite[p.\  55]{Billingsley2012}.
This approach would require
us to show that $\{\omega\in\Omega:\wadcsp^{[N]}(\omega) \subset \wadcsp^\rho\}$
is measurable for each $N \in \natural$, which entails those of
$\{\omega\in\Omega:\wadcsp^{[N]}(\omega) \subset \wadcsp^\rho\}$
and $\wadcsp^{[N]}$.
Using \cite[Thm.\ 8.2.8]{Aubin2009}, we can show that
$\wadcsp^{[N]}$ is measurable. However, an application of
\cite[Thm.\ 8.2.8]{Aubin2009} requires $(\Omega, \cF, P)$ be complete.

\subsection{Consistency of SAA optimal values and SAA solutions}
\label{subsect:consistency_solutions}

We demonstrate the consistency of the SAA optimal value
and the SAA solutions. 
Let $\vartheta^*$ and $\mathscr{S}$ be the optimal value
and the set of solutions to \eqref{eq:ocp}, respectively.
Moreover, for each $\omega \in \Omega$, let
$\hat{\vartheta}_N^*(\omega)$ and $\hat{\mathscr{S}}_N(\omega)$ be the optimal value
and the set of solutions to the SAA problem \eqref{eq:saa}, respectively.

We define
the distance $\dist{u}{\mathscr{S}}$ from $u \in \hat{\mathscr{S}}_N(\omega)$ 
to $\mathscr{S}$ and
the deviation $\deviation{\hat{\mathscr{S}}_N(\omega)}{\mathscr{S}}$ 
between the sets 
$\hat{\mathscr{S}}_N(\omega)$ and 
$\mathscr{S}$ by
\begin{align*}
\dist{u}{\mathscr{S}} = \inf_{v\in \mathscr{S}}\, \norm[\csp]{u-v}
\quad \text{and} \quad 
\deviation{\hat{\mathscr{S}}_N(\omega)}{\mathscr{S}} = 
\sup_{u\in \hat{\mathscr{S}}_N(\omega)}\, 	\dist{u}{\mathscr{S}}. 
\end{align*}

\begin{Theorem}
\label{prop:deviation_solutions}
If \Cref{ass:pobj,ass:E,ass:existence,ass:pobjE} hold,
then
$\hat{\vartheta}_N^* \to \vartheta^*$
and
$\deviation{\hat{\mathscr{S}}_N}{\mathscr{S}} \to 0$
\wpone\	as $N \to \infty$.
\end{Theorem}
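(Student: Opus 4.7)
The plan is to reduce everything to the norm-compact set $\wadcsp^\rho$ via \Cref{lem:wadcontainssol}, establish a uniform strong law of large numbers on that set, and then invoke the standard stochastic-programming consistency argument in the spirit of \cite[Thm.\ 5.3]{Shapiro2014}. By \Cref{lem:wadcontainssol}, $\mathscr{S} \subset \wadcsp^\rho$ deterministically, and \wpone\ for all sufficiently large $N$, $\hat{\mathscr{S}}_N \subset \wadcsp^\rho$; combined with \Cref{ass:existence} this means that \wpone\ for all sufficiently large $N$,
\begin{equation*}
	\vartheta^* = \min_{u \in \wadcsp^\rho}\, \erpobj(u),
	\qquad
	\hat{\vartheta}_N^* = \min_{u \in \wadcsp^\rho}\, \hat{\erpobj}_N(u),
\end{equation*}
so the analysis is posed on the compact set $\wadcsp^\rho \subset B_{\vadcsp^\rho(u_0)}$ from \Cref{lem:wadprecompact}.

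Next I would prove the uniform strong law of large numbers
\begin{equation*}
	\sup_{u \in \wadcsp^\rho}\, \bigl|\hat{\erpobj}_{1,N}(u) - \erpobj_1(u)\bigr| \to 0
	\quad \text{\wpone\ as } N \to \infty.
\end{equation*}
All necessary ingredients are already supplied: \Cref{lem:rpobj_lipschitz} provides a \Caratheodory\ Lipschitz bound on the convex neighborhood $B_{\vadcsp^\rho(u_0)}$ with integrable modulus $C_K\zeta(\xi)$, \Cref{ass:pobjE_stability} gives $\cE{\zeta(\xi)} < \infty$, and $\wadcsp^\rho$ is norm-compact. The standard covering argument then applies: pick an $\varepsilon$-net $u^1,\dots,u^m \in \wadcsp^\rho$; on a probability-one event, the pointwise SLLN holds simultaneously at each $u^j$ and for $(1/N)\sum_i \zeta(\xi^i)$; the Lipschitz bound then controls the oscillation inside each ball by a multiple of $\varepsilon$; letting $N\to\infty$ and then $\varepsilon\to 0$ gives the uniform limit. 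Since $\psi + (\alpha/2)\norm[\csp]{\cdot}^2$ is deterministic, the same conclusion holds for $\hat{\erpobj}_N - \erpobj$ on $\wadcsp^\rho$.

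The consistency conclusions then follow by a routine argument. For the optimal values, uniform convergence on the compact set $\wadcsp^\rho$, together with continuity of $\erpobj_1$ on $B_{\vadcsp^\rho(u_0)}$ from \Cref{eq:erpobj1_smooth} and lower semicontinuity of $\psi$, gives $\hat{\vartheta}_N^* \to \vartheta^*$ \wpone. For the deviation, I argue by contradiction: if on a positive-probability event $\deviation{\hat{\mathscr{S}}_N}{\mathscr{S}} \not\to 0$, extract $\hat{u}_{N_k} \in \hat{\mathscr{S}}_{N_k} \subset \wadcsp^\rho$ with $\dist{\hat{u}_{N_k}}{\mathscr{S}} \geq \delta > 0$; compactness of $\wadcsp^\rho$ yields a norm-convergent subsequence with limit $\bar u \in \wadcsp^\rho$. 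Lower semicontinuity of $\erpobj$ together with uniform convergence gives $\erpobj(\bar u) \leq \liminf_j \hat{\erpobj}_{N_{k_j}}(\hat{u}_{N_{k_j}})$, which in turn is $\leq \liminf_j \hat{\erpobj}_{N_{k_j}}(v) = \erpobj(v) = \vartheta^*$ for any fixed $v \in \mathscr{S}$ by the pointwise SLLN; hence $\bar u \in \mathscr{S}$, contradicting $\dist{\bar u}{\mathscr{S}} \geq \delta$.

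The main technical obstacle is the uniform SLLN on the infinite-dimensional compact set. The oscillation/covering argument is classical, but it essentially requires the Lipschitz control to hold on a \emph{convex} neighborhood of $\wadcsp^\rho$ — exactly why \Cref{ass:pobjE} introduces $B_{\vadcsp^\rho(u_0)}$ rather than working on $\wadcsp^\rho$ directly. A secondary technical point is building a single probability-one event on which the containment $\hat{\mathscr{S}}_N \subset \wadcsp^\rho$ from \Cref{lem:wadcontainssol}, the uniform SLLN, and the pointwise SLLN at a chosen reference point in $\mathscr{S}$ all hold simultaneously; this is handled by intersecting countably many almost-sure events.
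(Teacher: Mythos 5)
Your overall architecture coincides with the paper's: restrict both problems to the compact set $\wadcsp^\rho$, prove a uniform law of large numbers there, and run the standard value/deviation argument. Two remarks. First, the one substantive gap: you derive ``\wpone\ for all sufficiently large $N$, $\hat{\mathscr{S}}_N \subset \wadcsp^\rho$'' directly from \Cref{lem:wadcontainssol}, but that lemma only asserts $\mathscr{S} \subset \wadcsp^\rho$ and the eventual inclusion $\wadcsp^{[N]} \subset \wadcsp^\rho$. To pass from $\wadcsp^{[N]}$ to $\hat{\mathscr{S}}_N$ one must first show that \wpone\ the SAA solutions eventually lie in $\vadcsp^\rho(u_0)$ --- this uses the pointwise strong law at $u_0$ together with $\rpobj_1 \geq 0$ and $\psi \geq 0$ to bound $(\alpha/2)\norm[\csp]{u_N^*}^2$ by $(1/N)\sum_{i=1}^N \rpobj(u_0,\xi^i) \leq \cE{\rpobj(u_0,\xi)} + \rho$ --- and then invoke the first-order optimality condition $u_N^* = \prox{\psi/\alpha}{-(1/\alpha)\nabla\hat{\erpobj}_{1,N}(u_N^*)}$ to conclude $u_N^* \in \wadcsp^{[N]}$. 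This is precisely the content of the paper's \Cref{lem:sninwadcp}; without it your reduction to $\wadcsp^\rho$ is unsupported. Second, where you genuinely diverge: you prove the uniform SLLN on $\wadcsp^\rho$ by a finite $\varepsilon$-net combined with the Lipschitz oscillation bound with integrable modulus $C_K\zeta(\xi)$ from \Cref{lem:rpobj_lipschitz}, which is legitimate since the net points and their competitors all lie in the convex set $B_{\vadcsp^\rho(u_0)}$ on which the gradient bound holds. The paper instead cites a ULLN for \Caratheodory\ integrands dominated by an integrable function on a compact metric space (its \Cref{lem:uslln,lem:rpobjisrlsc}). Both routes are valid; yours is more self-contained but uses the stronger Lipschitz structure, while the paper's needs only continuity plus domination. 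A final minor omission: the almost-sure statements presuppose measurability of $\hat{\vartheta}_N^*$ and of $\deviation{\hat{\mathscr{S}}_N}{\mathscr{S}}$ (the paper's \Cref{lem:varthetansn}); you do not address this, although completeness of $(\Omega,\cF,P)$ at least makes the convergence event almost sure once a probability-one event of pointwise convergence has been exhibited.
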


We prepare our proof of 
\Cref{prop:deviation_solutions},
which is based on that of \cite[Thm.\ 5.3]{Shapiro2014}.

\begin{lemma}
\label{lem:rpobjisrlsc}
If \Cref{ass:pobj,ass:E,ass:pobjE} hold,
then the  function $\rpobj_1$
defined in 	\eqref{eq:rpobj1}
is a \Caratheodory\ function
on $\wadcsp^\rho \times \Xi$. 
Moreover, $(\rpobj_1(u,\xi))_{u\in\wadcsp^\rho}$
is dominated by an integrable function.
\end{lemma}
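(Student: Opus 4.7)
The plan is to derive both conclusions from results already established in the excerpt, with essentially no new work beyond assembling the pieces.

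First, for the Carathéodory property, I would simply observe that \Cref{lem:rpobj1islsc} already states that $\rpobj_1$ is Carathéodory on all of $\csp \times \Xi$. Since $\wadcsp^\rho \subset \csp$ (in fact $\wadcsp^\rho \subset \adcsp \subset \csp$ by \Cref{lem:wadprecompact}), restricting $u$ to $\wadcsp^\rho$ preserves both the continuity of $\rpobj_1(\cdot,\xi)$ for each $\xi \in \Xi$ and the measurability of $\rpobj_1(u,\cdot)$ for each fixed $u$. Nothing more is needed here.

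For the domination claim, the key is to produce one integrable envelope $\eta:\Xi\to[0,\infty)$ with $\rpobj_1(u,\xi)\le\eta(\xi)$ for every $u\in\wadcsp^\rho$. I would invoke \Cref{lem:rpobj_lipschitz}, which, for all $u\in B_{\vadcsp^\rho(u_0)}$ and $\xi\in\Xi$, gives
\begin{align*}
\rpobj_1(u,\xi) \le \rpobj_1(u_0,\xi) + C_K\,\zeta(\xi)\,\norm[\csp]{u-u_0}.
\end{align*}
Since $\wadcsp^\rho \subset \vadcsp^\rho(u_0) \subset B_{\vadcsp^\rho(u_0)}$ by the definition of $\wadcsp^\rho$ in \eqref{eq:wad} and by \Cref{ass:pobjE_operator}, this estimate applies on $\wadcsp^\rho$. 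Compactness of $\wadcsp^\rho$ (\Cref{lem:wadprecompact}) yields a finite constant $R := \sup_{u\in\wadcsp^\rho}\norm[\csp]{u-u_0}$, so
\begin{align*}
\rpobj_1(u,\xi) \le \rpobj_1(u_0,\xi) + C_K R\,\zeta(\xi) =: \eta(\xi) \quad \text{for all } u \in \wadcsp^\rho,\ \xi \in \Xi.
\end{align*}

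It remains to check $\cE{\eta(\xi)} < \infty$. The bound $\cE{\zeta(\xi)} < \infty$ is \Cref{ass:pobjE_stability}. For the other term, $u_0 \in \adcsp$ gives $\psi(u_0)<\infty$, and together with $\rpobj_1\ge 0$, $\psi\ge 0$, and $\cE{\rpobj(u_0,\xi)}<\infty$ (from the choice of $u_0$ preceding \Cref{ass:pobjE}) we obtain $\cE{\rpobj_1(u_0,\xi)} \le \cE{\rpobj(u_0,\xi)} < \infty$. Hence $\eta$ is integrable, concluding the proof.

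There is no real obstacle here; the only subtlety worth spelling out is the chain of inclusions $\wadcsp^\rho \subset \vadcsp^\rho(u_0) \subset B_{\vadcsp^\rho(u_0)}$, which licenses applying the Lipschitz-type bound of \Cref{lem:rpobj_lipschitz} uniformly over $\wadcsp^\rho$ and, via compactness, produces an envelope independent of $u$.
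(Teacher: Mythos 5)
Your proof is correct and follows essentially the same route as the paper: restrict the Carath\'eodory property from \Cref{lem:rpobj1islsc}, and obtain the integrable envelope from \Cref{lem:rpobj_lipschitz} together with the boundedness of $\wadcsp^\rho$ and the integrability of $\zeta$ and $\rpobj_1(u_0,\cdot)$. Your write-up is in fact slightly more explicit than the paper's about the chain of inclusions $\wadcsp^\rho \subset \vadcsp^\rho(u_0) \subset B_{\vadcsp^\rho(u_0)}$ and the integrability check, but there is no substantive difference.
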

\begin{proof}
\Cref{lem:wadprecompact} ensures that $\wadcsp^\rho$ 
is a compact metric space. 
Since $\wadcsp^\rho \subset \csp$
and $\rpobj_1$ is a \Caratheodory\ function  on $\csp \times \Xi$
(see \Cref{lem:rpobj1islsc}),
the function $\rpobj_1$ is a
\Caratheodory\ function on 
$\wadcsp^\rho \times \Xi$. 
\Cref{lem:rpobj_lipschitz} ensures that for all 
$u \in \wadcsp^\rho \subset \vadcsp^\rho(u_0)$,
\begin{align*}
	\rpobj_1(u,\xi) \leq 
	\rpobj_1(u_0,\xi) + C_K\zeta(\xi)
	\sup_{u\in\wadcsp^\rho}\, \norm[\csp]{u-u_0}.
\end{align*}
The  random variable  on the right-hand side is integrable owing to 
the integrability of $\zeta$ (see 	\Cref{ass:pobjE_stability}),
the boundedness of 
$\wadcsp^\rho$ (see \Cref{lem:wadprecompact}),
$C_K \in [0,\infty)$, and $\cE{\rpobj(u_0,\xi)} < \infty$.
Combined with $\rpobj_1 \geq 0$, we find that
$(\rpobj_1(u,\xi))_{\wadcsp^\rho}$  
is dominated by an integrable
random variable. 
\end{proof}

\begin{lemma}
\label{lem:varthetansn}
If \Cref{ass:pobj,ass:E,ass:existence,ass:pobjE} hold,
then  for each $N \in \natural$, the functions
$\hat{\vartheta}_N^*$ 
and 
$\deviation{\hat{\mathscr{S}}_N}{\mathscr{S}}$
are  measurable.
\end{lemma}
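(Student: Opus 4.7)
The plan is to recognize the SAA objective $\hat{\erpobj}_N$ as a normal integrand on $\csp\times\Omega$ and then invoke standard measurability results for the infimum and argmin multifunction of a normal integrand on a Polish space, using the same family of tools from \cite[Chap.\ 8]{Aubin2009} that are already employed elsewhere in the paper.

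The first step is to verify that $\hat{\erpobj}_N : \csp\times\Omega\to[0,\infty]$ is $(\mathcal{B}(\csp)\otimes\cF)$-measurable and, for each $\omega\in\Omega$, lower semicontinuous in $u$. Joint measurability of the random part $(u,\omega)\mapsto (1/N)\sum_{i=1}^N\rpobj_1(u,\xi^i(\omega))$ follows from the \Caratheodory\ property of $\rpobj_1$ established in \Cref{lem:rpobj1islsc} together with separability of $\csp$; the remaining summands $\psi$ and $(\alpha/2)\norm[\csp]{\cdot}^2$ are $\omega$-independent and Borel measurable. Lower semicontinuity in $u$ is immediate from the continuity of $\rpobj_1(\cdot,\xi^i(\omega))$, the lower semicontinuity of $\psi$ (\Cref{ass:pobj_3}), and the continuity of the quadratic term.

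The second step uses that $\hat{\mathscr{S}}_N(\omega)\neq\emptyset$ for every $\omega$ by \Cref{ass:existence}: applying the measurable-infimum theorem for normal integrands on a Polish space to the constant multifunction $\omega\mapsto\csp$ gives $\cF$-measurability of $\hat{\vartheta}_N^*(\omega)=\inf_{u\in\csp}\hat{\erpobj}_N(u,\omega)$. The solution set can then be written as the sub-level set $\hat{\mathscr{S}}_N(\omega)=\{u\in\csp:\hat{\erpobj}_N(u,\omega)-\hat{\vartheta}_N^*(\omega)\leq 0\}$ of the normal integrand $\hat{\erpobj}_N-\hat{\vartheta}_N^*$, so the type of projection argument that the authors invoke via \cite[Thm.\ 8.2.9]{Aubin2009} in the remark after \Cref{lem:wadcontainssol} yields that $\hat{\mathscr{S}}_N$ is a measurable closed-valued multifunction; completeness of $(\Omega,\cF,P)$ enters exactly as there.

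The third step is a standard supremum-over-multifunction result: the distance $u\mapsto\dist{u}{\mathscr{S}}$ is $1$-Lipschitz, hence trivially a \Caratheodory\ mapping, and the supremum of such a mapping over a measurable closed-valued multifunction is measurable (e.g.\ \cite[Thm.\ 8.2.11]{Aubin2009}), which applied to $\hat{\mathscr{S}}_N$ gives measurability of $\deviation{\hat{\mathscr{S}}_N}{\mathscr{S}}$. The main obstacle is less any individual step and more the careful bookkeeping that all cited selection and projection theorems are valid in the infinite-dimensional setting; separability of $\csp$ makes the Polish-space versions applicable, while completeness of $(\Omega,\cF,P)$ closes the same small measurability gap that is already discussed in the paper for $\wadcsp^{[N]}$.
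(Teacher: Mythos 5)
Your proposal is correct and follows essentially the same route as the paper: both arguments rest on recognizing the SAA objective as a normal integrand (Carath\'eodory part from \Cref{lem:rpobj1islsc} plus the lower semicontinuous, $\omega$-independent terms), deducing measurability of the optimal value and of the closed-valued solution multifunction from standard normal-integrand/measurable-selection theory, and then applying the supremum-over-a-measurable-multifunction result \cite[Thm.\ 8.2.11]{Aubin2009} to the Lipschitz function $\dist{\cdot}{\mathscr{S}}$. The only difference is bookkeeping: the paper cites \cite[Lem.\ III.39 and p.\ 86]{Castaing1977} for the first two steps where you spell out the normal-integrand and sub-level-set argument via \cite[Thm.\ 8.2.9]{Aubin2009}, which is an equivalent packaging of the same facts.
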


\begin{proof}
For each $\omega \in \Omega$, 
\Cref{ass:existence} ensures
that  $\hat{\mathscr{S}}_N(\omega)$ is nonempty.
The function $\rpobj_1$ is \Caratheodory\ function
on $\csp \times \Xi$	according to \Cref{lem:rpobjisrlsc} and $\psi$
is lower semicontinuous according to \Cref{ass:pobj_3}.
Hence $\hat{\vartheta}_N^*$  is 
measurable 	\cite[Lem.\ III.39]{Castaing1977}
and the set-valued mapping 
$\hat{\mathscr{S}}_N$ is measurable
\cite[p.\ 86]{Castaing1977}. 
\Cref{ass:existence} implies that
$\mathscr{S}$ is nonempty and, hence, 
$\dist{\cdot}{\mathscr{S}}$ is 
(Lipschitz) continuous
\cite[Thm.\ 3.16]{Aliprantis2006}.
For each $\omega \in \Omega$, 
$\hat{\erpobj}_N(\cdot, \omega)$ is lower semicontinuous
and hence $\hat{\mathscr{S}}_N(\omega)$ 
is closed. 
Thus
$\deviation{\hat{\mathscr{S}}_N}{\mathscr{S}} $ is measurable
\cite[Thm.\ 8.2.11]{Aubin2009}.
\end{proof}

\begin{lemma}
\label{lem:uslln}
If  \Cref{ass:pobj,ass:E,ass:existence,ass:pobjE} hold, 
then 
$\hat{\erpobj}_{N}$ 
converges to $\erpobj$
\wpone\ uniformly on $\wadcsp^\rho$.
\end{lemma}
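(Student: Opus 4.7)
The plan is to reduce the claim to a standard uniform strong law of large numbers applied to the integrand $\rpobj_1$ on the compact parameter set $\wadcsp^\rho$. First I would observe that the deterministic summands $\psi(u) + (\alpha/2)\norm[\csp]{u}^2$ appear identically in both $\erpobj$ and $\hat{\erpobj}_N$, so their difference coincides with $\erpobj_1(u) - \hat{\erpobj}_{1,N}(u)$ for every $u \in \wadcsp^\rho$ and every $N \in \natural$. It therefore suffices to establish that $\hat{\erpobj}_{1,N} \to \erpobj_1$ \wpone\ uniformly on $\wadcsp^\rho$.

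Next I would verify the hypotheses of a uniform strong law of large numbers for \Caratheodory\ integrands on a compact parameter set, such as \cite[Thm.\ 7.48]{Shapiro2014}: (i) by \Cref{lem:wadprecompact}, $\wadcsp^\rho$ is a compact metric space when endowed with $\norm[\csp]{\cdot}$; (ii) by \Cref{lem:rpobjisrlsc}, the mapping $\rpobj_1$ is \Caratheodory\ on $\wadcsp^\rho \times \Xi$ and the family $(\rpobj_1(u, \cdot))_{u \in \wadcsp^\rho}$ is dominated by an integrable random variable; (iii) by assumption, $\xi^1, \xi^2, \ldots$ are i.i.d.\ copies of $\xi$. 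Invoking such a theorem then yields
$$\sup_{u \in \wadcsp^\rho}\, |\hat{\erpobj}_{1,N}(u) - \erpobj_1(u)| \to 0 \quad \text{\wpone\ as } N \to \infty,$$
which combined with the cancellation observation above proves the lemma.

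The main obstacle is not located in this proof but has already been discharged in the preparatory results: establishing the compactness of $\wadcsp^\rho$ (which required the compact operator $\coperator$ of \Cref{ass:pobjE_operator}, the continuity of $\prox{\psi/\alpha}{}$, and the uniform bound of \Cref{ass:pobjE_stability}) and constructing the integrable envelope of $(\rpobj_1(u,\xi))_{u \in \wadcsp^\rho}$ through the Lipschitz-type estimate of \Cref{lem:rpobj_lipschitz}. With these two ingredients already in place, no further Glivenko--Cantelli, equicontinuity, or tightness argument is needed, and the uniform SLLN applies out of the box.
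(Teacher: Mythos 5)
Your proposal is correct and follows essentially the same route as the paper: cancel the deterministic terms $\psi(u)+(\alpha/2)\norm[\csp]{u}^2$ (which is valid since $\wadcsp^\rho\subset\adcsp$ makes $\psi$ finite there), then apply a uniform strong law of large numbers on the compact set $\wadcsp^\rho$ using the \Caratheodory\ property and integrable envelope from \Cref{lem:rpobjisrlsc}. The only difference is the citation for the uniform LLN (the paper invokes \cite[Cor.\ 4:1]{LeCam1953} rather than \cite[Thm.\ 7.48]{Shapiro2014}), which is immaterial since the hypotheses verified are identical.
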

\begin{proof}
We first verify the hypotheses of the uniform law of large numbers
established in  \cite[Cor.\ 4:1]{LeCam1953} to 
demonstrate the uniform
almost sure convergence of 
$\hat{\erpobj}_{1,N}$ 
to $\erpobj_1$
on $\wadcsp^\rho$.

\Cref{lem:rpobjisrlsc} ensures that
$\rpobj_1$ is a \Caratheodory\ function
on $\wadcsp^\rho \times \Xi$
and that $(\rpobj_1(u,\xi))_{u\in\wadcsp^\rho}$
is dominated by an integrable function.
Moreover, $\wadcsp^\rho$ is a  compact metric space
(see \Cref{lem:wadprecompact}).
Since $\xi^1, \xi^2, \ldots$ are independent
identically distributed random elements, the uniform law of large numbers
\cite[Cor.\ 4:1]{LeCam1953} implies
that
$\hat{\erpobj}_{1,N}(\cdot) = (1/N) \sum_{i=1}^N \rpobj_1(\cdot,\xi^i)$ 
converges to $\erpobj_1(\cdot) = \cE{\rpobj_1(\cdot,\xi)}$
\wpone\ uniformly on $\wadcsp^\rho$.

Since $\adcsp$ is the domain of $\psi$ and $\psi \geq 0$,
we have $ \psi(u)  \in [0,\infty)$
for all $u \in \adcsp$. \Cref{lem:wadprecompact} ensures
$\wadcsp^\rho \subset \adcsp$. 
Hence for all $u \in \wadcsp^\rho$,
\begin{align*}
	\hat{\erpobj}_N(u) - \erpobj(u)
	&= \hat{\erpobj}_{1,N}(u) +(\alpha/2)\norm[\csp]{u}^2 + \psi(u)
	- \big(\erpobj_1(u) + (\alpha/2)\norm[\csp]{u}^2 + \psi(u) \big)
	\\
	& = \hat{\erpobj}_{1,N}(u)  - \erpobj_1(u).
\end{align*}
Therefore, the assertion follows from the above uniform 
convergence statement.
\end{proof}

\Cref{lem:sninwadcp} demonstrates that
the SAA solution set is eventually contained in the
compact set $\wadcsp^\rho$.

\begin{lemma}
\label{lem:sninwadcp}
If \Cref{ass:pobj,ass:E,ass:existence,ass:pobjE} hold,
then \wpone\ for all sufficiently large $N$, 
$\hat{\mathscr{S}}_N \subset \wadcsp^\rho$.
\end{lemma}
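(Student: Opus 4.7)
The plan is to combine the first-order proximal characterization of SAA solutions with the cost-level bound coming from the strong law of large numbers, and then invoke part (b) of \Cref{lem:wadcontainssol}.

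First, I would note that each SAA solution $\hat{u}_N(\omega) \in \hat{\mathscr{S}}_N(\omega)$ lies in $\adcsp$ (otherwise $\hat{\erpobj}_N(\hat{u}_N,\omega) = \infty$, contradicting optimality against the feasible point $u_0$), and satisfies the proximal fixed-point equation
\[
\hat{u}_N(\omega) = \prox{\psi/\alpha}{-(1/\alpha)\nabla \hat{\erpobj}_{1,N}(\hat{u}_N(\omega),\omega)},
\]
exactly as was used in the proof of \Cref{lem:wadcontainssol}(a) (this uses smoothness of $\hat{\erpobj}_{1,N}$ from \Cref{eq:erpobj1_smooth} and the standard proximal first-order condition).

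Next, I would show that $\hat{u}_N \in \vadcsp^\rho(u_0)$ w.p.~1 for all sufficiently large $N$. Optimality of $\hat{u}_N$ gives
\[
(\alpha/2)\norm[\csp]{\hat{u}_N}^2 \leq \hat{\erpobj}_N(\hat{u}_N) \leq \hat{\erpobj}_N(u_0) = \hat{\erpobj}_{1,N}(u_0) + \psi(u_0) + (\alpha/2)\norm[\csp]{u_0}^2,
\]
since $\rpobj_1, \psi \geq 0$. The strong law of large numbers applied to the integrable random variable $\rpobj_1(u_0,\xi)$ (integrability holds since $\cE{\rpobj(u_0,\xi)} < \infty$) yields $\hat{\erpobj}_{1,N}(u_0) \to \erpobj_1(u_0)$ w.p.~1. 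Hence there is an event $\Omega_2 \in \cF$ with $P(\Omega_2) = 1$ such that for each $\omega \in \Omega_2$ there exists $n_1(\omega)$ with $\hat{\erpobj}_{1,N}(u_0,\omega) \leq \erpobj_1(u_0) + \rho$ for all $N \geq n_1(\omega)$. For such $\omega, N$,
\[
(\alpha/2)\norm[\csp]{\hat{u}_N(\omega)}^2 \leq \erpobj_1(u_0) + \psi(u_0) + (\alpha/2)\norm[\csp]{u_0}^2 + \rho = \cE{\rpobj(u_0,\xi)} + \rho,
\]
so $\hat{u}_N(\omega) \in \vadcsp^\rho(u_0)$.

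Finally I combine the pieces. On $\Omega_2$ and for $N \geq n_1(\omega)$, the proximal fixed-point equation together with $\hat{u}_N(\omega) \in \vadcsp^\rho(u_0)$ shows that $\hat{u}_N(\omega)$ belongs to the set whose closure defines $\wadcsp^{[N]}(\omega)$, hence $\hat{\mathscr{S}}_N(\omega) \subset \wadcsp^{[N]}(\omega)$. By \Cref{lem:wadcontainssol}(b), there is an event $\Omega_1 \in \cF$ with $P(\Omega_1) = 1$ on which $\wadcsp^{[N]}(\omega) \subset \wadcsp^\rho$ for all sufficiently large $N$. Intersecting $\Omega_1 \cap \Omega_2$ gives the claim, with measurability of the resulting ``eventually contained'' event following from completeness of $(\Omega,\cF,P)$ as in the proof of \Cref{lem:wadcontainssol}(b). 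The only subtle point is the bookkeeping of the two full-probability events and ensuring the combined event is measurable, which is handled by the completeness of the probability space.
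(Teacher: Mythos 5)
Your proposal is correct and follows essentially the same route as the paper's proof: establish $\hat{\mathscr{S}}_N \subset \vadcsp^\rho(u_0)$ eventually w.p.~1 via the strong law of large numbers at $u_0$ together with optimality and nonnegativity of $\rpobj_1$ and $\psi$, then use the proximal first-order condition to place these solutions in $\wadcsp^{[N]}$, and conclude with \Cref{lem:wadcontainssol}(b) and completeness of the probability space. The only cosmetic difference is that you apply the law of large numbers to $\rpobj_1(u_0,\cdot)$ rather than to $\rpobj(u_0,\cdot)$, which is immaterial since the two differ by the deterministic constant $\psi(u_0)+(\alpha/2)\norm[\csp]{u_0}^2$.
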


\begin{proof}
First, we show that 
\wpone\ for all sufficiently large $N$, 
$\hat{\mathscr{S}}_N \subset \vadcsp^\rho(u_0)$.
\Cref{lem:rpobjisrlsc} ensures that $\rpobj_1$ is a \Caratheodory\
function on $\csp \times \Xi$.
Since $\rpobj \geq 0$, $u_0 \in \adcsp$,
and $\cE{\rpobj(u_0,\xi)} < \infty$, 
the (strong) law of large numbers ensures
\begin{align*}
	\frac{1}{N}\sum_{i=1}^N \rpobj(u_0,\xi^i)
	\to \cE{\rpobj(u_0,\xi)}
	\quad \text{\wpone} \quad \as \quad N \to \infty.
\end{align*}
Combined with $\rho > 0$, we deduce 
the existence of an event $\Omega_1 \in \cF$
such that $P(\Omega_1) = 1$ and for
each $\omega \in \Omega_1$, there exists
$n_1(\omega) \in \natural$ such that
for all $N \geq n_1(\omega)$, we have
\begin{align}
	\label{eq:sllnineqj}
	\frac{1}{N}\sum_{i=1}^N \rpobj(u_0,\xi^i(\omega))
	\leq  \cE{\rpobj(u_0,\xi)} + \rho.
\end{align}
Fix $\omega \in \Omega_1$ and let $N \geq n_1(\omega)$.
Using $\psi \geq 0$ and $\rpobj_1 \geq 0$, 
we have for all $u_N^* = u_N^*(\omega) \in \hat{\mathscr{S}}_N(\omega)$, 
\begin{align*}
	(\alpha/2) \norm[\csp]{u_N^*}^2 
	&\leq \frac{1}{N}\sum_{i=1}^N \rpobj_1(u_N^*,\xi^i(\omega))
	+ \psi(u_N^*) + (\alpha/2) \norm[\csp]{u_N^*}^2 
	\\
	&\leq \frac{1}{N}\sum_{i=1}^N \rpobj(u_0,\xi^i(\omega)).
\end{align*}
Combined with \eqref{eq:sllnineqj}, we find that
$\hat{\mathscr{S}}_N(\omega) \subset \vadcsp^\rho(u_0)$.

By construction of $\wadcsp^{[N]}$, 
we have
$\hat{\mathscr{S}}_N(\omega) \cap \vadcsp^\rho(u_0) \subset
\wadcsp^{[N]}(\omega)$
for all $\omega \in \Omega$. 
Indeed, 
if $u_N^*(\omega) \in \hat{\mathscr{S}}_N(\omega) \cap \vadcsp^\rho(u_0)$, 
then  we have the first-order optimality condition
$u_N^*(\omega) = \prox{\psi/\alpha}{-(1/\alpha)
	\nabla \hat{\erpobj}_{1,N}(u_N^*(\omega),\omega)}$.
Hence $u_N^*(\omega) \in \wadcsp^{[N]}(\omega)$.
\Cref{lem:wadcontainssol}
implies that 
\wpone for all sufficiently large 
$N$, $\wadcsp^{[N]} \subset \wadcsp^\rho$. 
Hence there exists $\Omega_2 \in \cF$
with $P(\Omega_2) = 1$
and for each $\omega \in \Omega_2$
there exists $n_2(\omega) \in \natural$
such that 
for all $N \geq n_2(\omega)$, 
$\wadcsp^{[N]}(\omega) \subset \wadcsp^\rho$.	
Putting together the pieces, we find that 
for all $\omega \in \Omega_1 \cap \Omega_2$
and each $N \geq \max\{n_1(\omega), n_2(\omega)\}$, 
we have 
$\hat{\mathscr{S}}_N(\omega) \subset \wadcsp^\rho$.
Since $(\Omega,\cF, P)$ is complete and
$P(\Omega_1 \cap \Omega_2) = 1$, we have
\wpone for all sufficiently large $N$,
$\hat{\mathscr{S}}_N \subset \wadcsp^\rho$.
\end{proof}

\begin{proof}[{Proof of \Cref{prop:deviation_solutions}}]
The proof is based on that of 
\cite[Thm.\ 5.3]{Shapiro2014}.
\Cref{lem:wadcontainssol} yields
$\mathscr{S} \subset \wadcsp^\rho$.
\Cref{lem:sninwadcp} ensures that \wpone 
for all sufficiently large $N$, 
$\hat{\mathscr{S}}_N \subset \wadcsp^\rho$.
Hence, we deduce the existence of
an event $\Omega_1 \in \cF$
with $P(\Omega_1) = 1$ 
and for each $\omega \in \Omega_1$ there exists
$n(\omega) \in \natural$ such that for all 
$N \geq n(\omega)$,
$\hat{\mathscr{S}}_N(\omega) \subset \wadcsp^\rho$.
\Cref{lem:uslln} ensures that
$\hat{\erpobj}_{N}(\cdot,\omega)$ 
converges to $\erpobj(\cdot)$
uniformly on $\wadcsp^\rho$ 
for almost all $\omega \in \Omega$.
Therefore, there exists $\Omega_2 \in \cF$
with $P(\Omega_2) = 1$
and for each $\omega \in \Omega_2$, 
$\hat{\erpobj}_{N}(\cdot,\omega)$ 
converges to $\erpobj(\cdot)$
uniformly on $\wadcsp^\rho$.

We show that $\hat{\vartheta}_N^*(\omega) \to \vartheta^*$  as
$N \to \infty$
for each $\omega \in \Omega_1 \cap \Omega_2$.
Fix $\omega \in \Omega_1 \cap \Omega_2$.
\Cref{ass:existence} ensures that 
$\mathscr{S}$ and
$\hat{\mathscr{S}}_N(\omega)$ are nonempty for all $N \in \natural$. 
Let $u^* \in \mathscr{S}$ 
and let $u_N^*(\omega) \in \hat{\mathscr{S}}_N(\omega)$.
Then for all $N \geq n(\omega)$, we have
$u_N^*(\omega) \in \wadcsp^\rho$
and hence 
$|\hat{\vartheta}_N^*(\omega)-\vartheta^*|
\leq \sup_{u \in \wadcsp^\rho}\,
|\hat{\erpobj}_{N}(u,\omega)-\erpobj(u)|
$
for all $N \geq n(\omega)$
(cf.\  \cite[pp.\ 194--195]{Kaniovski1995}).
We deduce $\hat{\vartheta}_N^*(\omega) \to \vartheta^*$  as
$N \to \infty$.

Next, we show that
$\deviation{\hat{\mathscr{S}}_N(\omega)}{\mathscr{S}} \to 0$
as $N \to \infty$
for each $\omega \in \Omega_1 \cap \Omega_2$.
Fix $\omega \in \Omega_1 \cap \Omega_2$.
Since $\mathscr{S}$ is nonempty
(see \Cref{ass:existence}), the function $\dist{\cdot}{\mathscr{S}}$
is (Lipschitz) continuous \cite[Thm.\ 3.16]{Aliprantis2006}.
For each $N \geq n(\omega)$, the set
$\hat{\mathscr{S}}_N(\omega)$ is closed and
$\hat{\mathscr{S}}_N(\omega) \subset \wadcsp^\rho$.
Hence $\hat{\mathscr{S}}_N(\omega)$ is compact for each 
$N \geq n(\omega)$.
Therefore, for each $N \geq n(\omega)$, 
there exists $u_N = u_N(\omega) \in 
\wadcsp^\rho$ with
$\dist{u_N}{\mathscr{S}} = 
\deviation{\hat{\mathscr{S}}_N(\omega)}{\mathscr{S}}$.	
Suppose that 
$\deviation{\hat{\mathscr{S}}_N(\omega)}{\mathscr{S}} \not \to 0$. 
We deduce the existence of 
a subsequence 
$\mathcal{N} = \mathcal{N}(\omega)$ of
$(n(\omega),n(\omega)+1, \ldots)$
such that
$\deviation{u_N}{\mathscr{S}} \geq \varepsilon$
for all $N \in \mathcal{N}$ and  some $\varepsilon > 0$,
and $u_N \to \bar{u} \in \wadcsp^\rho$
as $\mathcal{N} \ni N \to \infty$.
Combined with the fact that $\dist{\cdot}{\mathscr{S}}$ is continuous,
we obtain  $\bar{u} \not\in \mathscr{S}$. Hence $\erpobj(\bar{u}) > 
\vartheta^*$.
We have
\begin{align*}
	\liminf_{\mathcal{N}\ni N \to \infty} \,\hat{\erpobj}_{N}(u_N,\omega)
	= \lim_{\mathcal{N}\ni N \to \infty}
	\big(\hat{\erpobj}_{N}(u_N,\omega)-\erpobj(u_N)\big)
	+ 
	\liminf_{\mathcal{N}\ni N \to \infty}\, \erpobj(u_N).
\end{align*}
The uniform convergence implies that
the first term in the right-hand side is zero.
Since $\erpobj$ is lower semicontinuous on $\adcsp$
(see \Cref{ass:pobj,eq:erpobj1_smooth}),
$\erpobj(\bar{u}) > \vartheta^*$, 
and $\hat{\erpobj}_{N}(u_N,\omega) = \hat{\vartheta}_N^*(\omega)$,
we find that
\begin{align*}
	\liminf_{\mathcal{N}\ni N \to \infty} \hat{\vartheta}_N^*(\omega) = 
	\liminf_{\mathcal{N}\ni N \to \infty} \,\hat{\erpobj}_{N}(u_N,\omega)
	= \liminf_{\mathcal{N}\ni N \to \infty}\, \erpobj(u_N)
	\geq \erpobj(\bar{u}) > \vartheta^*.
\end{align*}
This contradicts $\hat{\vartheta}_N^*(\omega) \to \vartheta^*$
as $N \to \infty$.
Hence 	$\deviation{\hat{\mathscr{S}}_N(\omega)}{\mathscr{S}} \to 0$
as $N \to \infty$.
Combined with \Cref{lem:varthetansn}
and the fact that $P(\Omega_1 \cap \Omega_2) = 1$, 
we obtain the almost sure
convergence statements.
\end{proof}

\section{Examples}
\label{sec:applications}

We present three  risk-neutral nonlinear PDE-constrained
optimization problems 
and verify the assumptions made in \cref{sec:assumptions}, except
\Cref{ass:existence} on the existence of solutions in order to
keep the section relatively short.

We use the following facts.
(i)
The Sobolev spaces $H_0^1(\domain)$ and  $H^1(\domain)$ 
are separable Hilbert spaces \cite[Thm.\ 3.5]{Adams1975}.
(ii) If a real Banach space is reflexive and separable, 
then its dual is separable \cite[Thm.\ 1.14]{Adams1975}.
(iii)
The operator norm of a linear, bounded operator  equals that of
its (Banach space-)adjoint operator 
\cite[Thm.\ 4.5-2]{Kreyszig1978}.
(iv) If   $\Lambda_1$ and $\Lambda_2$ are real, reflexive Banach spaces
and $\Upsilon \colon \Lambda_1 \to \Lambda_2$ is linear and bounded, 
then $(\Upsilon^*)^* = \Upsilon$
\cite[p.\ 390]{Alt2016}
(see also \cite[Thm.\ 8.57]{Renardy2004})
because we write $(\Lambda_i^*)^* = \Lambda_i$ for $i \in \{1,2\}$.

\subsection{Boundary optimal control of a semilinear state equation}
\label{subsec:boundary}

We consider the risk-neutral boundary optimal control of a 
parameterized semilinear PDE.
Our model problem is based on the 
deterministic semilinear boundary control problems
studied in \cite{Troeltzsch2010a,Hinze2009,Casas2012c,Kahlbacher2008}.

We consider 
\begin{align}
\label{eq:glocp}
\min_{u\in L^2(\partial \domain)}\, (1/2)\cE{\norm[L^2(\domain)]{S(u,\xi)-y_d}^2}
+ (\alpha/2)\norm[L^2(\partial \domain)]{u}^2
+ \psi(u),
\end{align} 
where $\partial\domain$ is the boundary of $\domain \subset \real^{2}$
and 
for each $(u,\xi) \in L^2(\partial \domain) \times \Xi$, 
the state $S(u,\xi) \in H^1(\domain)$ is the weak solution to: find
$y \in H^1(\domain)$ with
\begin{align}
\label{eq:gleq}
-\nabla \cdot (\kappa(\xi)\nabla y) + g(\xi) y + y^3 = \rrhs(\xi)
\;\; \text{in} \;\;  \domain,
\quad 
\rdc(\xi) \partial_\nu y + \sigma(\xi)y
= Bu 
\;\;  \text{on} \;\;  \partial\domain,
\end{align}
where $\partial_\nu y$ is the normal derivative of $y$; see
\cite[p.\ 31]{Troeltzsch2010a}.
For a bounded Lipschitz domain $\domain \subset \real^d$, 
we denote by $L^2(\partial \domain)$
the space of square integrable functions on $\partial \domain$
and by $L^\infty(\partial \domain)$ that of essentially bounded functions
\cite[p.\ 263]{Alt2016}.
The space $L^2(\partial \domain)$ is a  Hilbert space
with inner product
$\inner[L^2(\partial\domain)]{v}{w} = \int_{\partial \domain} 
v(x)w(x) \du \mathrm{H}^{d-1}(x)$, where
$\mathrm{H}^{d-1}$ is the $(d-1)$-dimensional Hausdorff measure
on $\partial \domain$
\cite[Thm.\ 3.16 and pp.\ 47, 263 and 267]{Alt2016}.
The space $L^2(\partial \domain)$ is separable \cite[Thm.\ 4.1]{Necas2012}.

We formulate assumptions on the control problem \eqref{eq:glocp}.

\begin{itemize}
[wide,nosep,leftmargin=*]
\item 
$\domain \subset \real^2$
is a bounded Lipschitz domain.
\item 
\label{itm:random_data1:app1}
$\kappa$, $g : \Xi \to L^\infty(\domain)$ are strongly measurable
and there exist $ \kappa_{\min}$, $ \kappa_{\max}$, 
$g_{\min}$, $g_{\max} \in (0,\infty)$ such that
$ \kappa_{\min} \leq \kappa(\xi) \leq  \kappa_{\max}$
and $g_{\min}  \leq g(\xi) \leq  g_{\max}$
for all $\xi \in \Xi$.

\item 
$\rrhs \colon \Xi \to L^2(\domain)$
and $\sigma : \Xi \to L^\infty(\partial \domain)$
are strongly measurable
with $\cE{\norm[L^2(\domain)]{\rrhs(\xi)}^2}< \infty$,
$\cE{\norm[L^\infty(\partial \domain)]{\sigma(\xi)}^2} < \infty$
and $\sigma(\xi) \geq 0$
for all $\xi \in \Xi$.
\item 
$B : L^2(\partial \domain) \to L^2(\partial\domain)$
is a linear, bounded operator.

\item 
$y_d \in L^2(\domain)$, $\alpha > 0$,
and $\psi : L^2(\partial \domain) \to [0,\infty]$
is proper, convex, and lower semicontinuous.
\end{itemize}
Throughout the section, we assume these conditions be satisfied.

We establish \Cref{ass:pobj}.  
Since the embedding $H^1(\domain) \embedding L^2(\domain)$ is continuous,
the function $\pobj_1: H^1(\domain) \to [0,\infty)$ defined
by  $\pobj_1(y) = (1/2)\norm[L^2(\domain)]{y-y_d}^2$
is continuously differentiable. 
We find that \Cref{ass:pobj} holds true.

We formulate the weak form of \eqref{eq:gleq} as an operator equation; 
cf.\ \cite[eq.\ (2)]{Kahlbacher2008}.
We define 
$E: H^1(\domain) \times L^2(\partial \domain) \times \Xi \to H^1(\domain)^*$ by
\begin{align}
\label{eq:boundarypde}
\begin{aligned}
	\dualp[H^1(\domain)]{E(y,u,\xi)}{v}
	&= 
	\inner[L^2(\domain)^{2}]{\kappa(\xi)\nabla y}{\nabla v}
	+\inner[L^2(\domain)]{g(\xi)y+y^3}{v}
	\\
	&\quad+ \inner[L^2(\partial\domain)]{\sigma(\xi)\tau_{\partial\domain}[y]}
	{\tau_{\partial\domain}[v]}
	\\
	&\quad -\inner[L^2(\domain)]{\rrhs(\xi)}{v}
	-\inner[L^2(\partial\domain)]{Bu}{\tau_{\partial\domain}[v]}.
\end{aligned}
\end{align}
where $\tau_{\partial\domain} : H^1(\domain) \to L^2(\partial \domain)$
is the trace operator. We refer the reader to 
\cite[p.\ 268]{Alt2016} for the definition of $\tau_{\partial\domain}$.
Since $\domain$ has a Lipschitz boundary,
the trace  operator $\tau_{\partial\domain}$ is linear and
compact \cite[Thm.\ 6.2]{Necas2012}.

We verify \Cref{ass:E}.
Using \cite[Thm.\ 1.15]{Hinze2009}, we find that
$E(y,u,\xi) = 0$ has a unique solution $S(u,\xi) \in H^1(\domain)$
for each $(u,\xi) \in L^2(\partial \domain) \times \Xi$.
Since the embedding $H^1(\domain) \embedding L^6(\domain)$
is continuous \cite[Thm.\ 1.14]{Hinze2009}, we have
$y^3 \in L^2(\domain)$ for each $y \in H^1(\domain)$
\cite[p.\ 57]{Hinze2009}
and the mapping $L^6(\domain) \ni y \mapsto y^3 \in L^2(\domain)$
is continuously differentiable \cite[p.\ 76]{Hinze2009}.
We find that
$E(\cdot, \cdot, \xi)$ is continuously differentiable.
Now, the Lax--Milgram lemma can be used to show that
$E_y(S(u,\xi),u,\xi)$ has a bounded inverse.
We show that $E(y,u,\cdot)$ is measurable for each 
$(y,u) \in H^1(\domain) \times L^2(\partial \domain)$. 
Since  $H^1(\domain)^*$ is separable, 
it suffices to show that 
$\xi \mapsto \dualp[H^1(\domain)]{E(y,u,\xi)}{v}$ is measurable
for each fixed $(y,v,u) \in H^1(\domain)^2 \times L^2(\partial \domain)$
\cite[Thm.\ 1.1.6]{Hytoenen2016}.
We define $\phi: L^\infty(\domain) \to \real$ by
$\phi(\nu) = \inner[L^2(\domain)^{2}]{\nu \nabla y}{\nabla v}$.
\hoelder's inequality ensures that $\phi$ is (Lipschitz)
continuous. Since 
$\xi \mapsto \inner[L^2(\domain)^{2}]{\kappa(\xi)\nabla y}{\nabla v}$
is the composition of the continuous function $\phi$
with $\kappa$, it is measurable
\cite[Cor.\ 1.1.11]{Hytoenen2016}. Similar arguments can be used
to establish the measurability of the other terms in
\eqref{eq:boundarypde}. Hence \Cref{ass:E} holds true.

We establish \Cref{ass:pobjE}.
Fix $(u,\xi) \in L^2(\partial \domain) \times \Xi$.
Choosing $v = S(u,\xi)$ in \eqref{eq:boundarypde} and using
\begin{align*}
\inner[L^2(\domain)^{2}]{\kappa(\xi)\nabla y}{\nabla y}
+\inner[L^2(\domain)]{g(\xi)y}{y}
\geq \min\{\kappa_{\min},g_{\min}\} \norm[H^1(\domain)]{y}^2
\end{align*}
valid for all $y \in H^1(\domain)$, 
we obtain the stability estimate
\begin{align}
\label{eq:gleq_h1_state}
\min\{\kappa_{\min},g_{\min}\}
%	\kappa_{\min}(\xi)
\norm[H^1(\domain)]{S(u,\xi)}
\leq \norm[L^2(\domain)]{\rrhs(\xi)} +
C_{\tau_{\partial\domain}}
\norm[L^2(\partial\domain)]{Bu},
\end{align}
where 
$C_{\tau_{\partial\domain}}$
is the operator norm of $\tau_{\partial\domain}$.
For each $(u,\xi) \in L^2(\partial\domain) \times \Xi$, 
let $z(u,\xi)$ be the unique solution to the adjoint equation:
find $z \in H^1(\domain)$ with
\begin{align*}
\inner[L^2(\domain)^{2}]{\kappa(\xi)\nabla z}{\nabla v}
&+\inner[L^2(\domain)]{g(\xi)z+3S(u,\xi)^2z}{v}
+ \inner[L^2(\partial\domain)]{\sigma(\xi)\tau_{\partial\domain}[z]}
{\tau_{\partial\domain}[v]}
\\
&
= -\inner[L^2(\domain)]{S(u,\xi)-y_d}{v}
\quad \text{for all} \quad v \in H^1(\domain);
\end{align*}
cf.\ \cite[eq.\ (4.54)]{Troeltzsch2010a}
and \cite[p.\ 729]{Kahlbacher2008}.
For its solution $z(u,\xi)$,
we obtain 
\begin{align}
\label{eq:gleq_h1_adjoint}
%\kappa_{\min}(\xi)
\min\{\kappa_{\min},g_{\min}\}
\norm[H^1(\domain)]{z(u,\xi)}
\leq
\norm[L^2(\domain)]{S(u,\xi)-y_d}.
\end{align}%
Since $\tau_{\partial\domain}^*$ is the adjoint operator
of $\tau_{\partial\domain}$, we have
for all $u \in L^2(\partial \domain)$ and $v \in H^1(\domain)$,
$\inner[L^2(\partial\domain)]{Bu}{\tau_{\partial\domain}[v]}
= \dualp[H^1(\domain)]{\tau_{\partial\domain}^*Bu}{v} 
$.
Combined with
$\tau_{\partial\domain} = (\tau_{\partial\domain}^*)^*$
and the identity $E_u(S(u,\xi),u,\xi) = -\tau_{\partial\domain}^*B$
(cf.\ \cite[p.\ 136]{Hinze2009}),
the gradient formula in  \eqref{eq:Euz} yields
\begin{align*}
\nabla \rpobj_1(u,\xi) = 
-B^*\tau_{\partial\domain}[z(u,\xi)].
\end{align*}
We choose $\coperator = -B^*\tau_{\partial\domain}$ and $M(u,\xi) = z(u,\xi)$.
The operator $\coperator : H^1(\domain) \to L^2(\partial \domain)$ is compact,
as $B$ is linear and bounded and $\tau_{\partial\domain}$ is linear and compact
\cite[Thm.\ 6.2]{Necas2012}. 
Using \cite[Thm.\ 8.2.9]{Aubin2009} and the measurability
of $S(u,\cdot)$ (see \Cref{lem:rpobj1islsc}), we can show that $z(u,\cdot)$
is measurable for all $u \in L^2(\partial\domain)$.
The implicit function theorem implies that $z(\cdot, \xi)$ is continuous
for each $\xi \in \Xi$. 
Since $\psi$ is proper, there exists $u_0 \in L^2(\partial \domain)$
with $\psi(u_0) < \infty$.
Using Young's inequality, we have
$
\rpobj_1(u_0,\xi)
\leq \norm[L^2(\domain)]{y_d}^2+
\norm[L^2(\domain)]{S(u_0,\xi)}^2
$.
Combined with \eqref{eq:gleq_h1_state}, we find that
$\cE{\rpobj_1(u_0,\xi)} < \infty$ and hence $\cE{\rpobj(u_0,\xi)} < \infty$.
Let $B_{\vadcsp^\rho(u_0)}$ be an open, bounded ball about zero 
containing $\vadcsp^\rho(u_0)$
and let  $\radius$ be its radius. We define
\begin{align*}
\zeta(\xi) = 
\tfrac{1}{\min\{\kappa_{\min},g_{\min}\}}
\Big(
\norm[L^2(\domain)]{y_d}
+ \tfrac{C_{\tau_{\partial\domain}}
	C_{B}\radius + \norm[L^2(\domain)]{\rrhs(\xi)}}
{\min\{\kappa_{\min},g_{\min}\}}
\Big).
\end{align*}
where $C_B > 0$ is the operator norm of $B$. The random
variable $\zeta$ is integrable.
Using the stability estimates \eqref{eq:gleq_h1_state} 
and \eqref{eq:gleq_h1_adjoint}, we conclude that \Cref{ass:pobjE} holds true.

\subsection{Distributed control of a steady Burgers' equation}

We consider the risk-neutral distributed optimization
of a steady Burgers' equation.
Deterministic optimal control problems with the
Burgers' equation are studied, for example,
in 
\cite{delosReyes2004,Volkwein1997,Volkwein2000a,Volkwein2000}.
We refer the reader to
\cite{Kouri2014a,Kouri2013,Kouri2016,Kouri2020a}
for risk-neutral and risk-averse control
of the steady Burgers' equation.

Let us consider 
\begin{align}
\label{eq:sbocp}
\min_{u\in \adcsp}\, (1/2)\cE{\norm[L^2(0,1)]{S(u,\xi)-y_d}^2}
+ (\alpha/2)\norm[L^2(\domain_0)]{u}^2,
\end{align}
where $\domain_0 \subset (0,1)$ is a nonempty domain
and
for all $(u,\xi) \in L^2(\domain_0) \times \Xi$, 
the state $S(u,\xi) \in H^1(0,1)$
is the weak solution to the
steady Burgers' equation: find $y \in H_0^1(0,1)$ with
\begin{align*}
-\rdc(\xi)y'' + y y'
= \rrhs(\xi) + Bu\quad \text{in}  \quad (0,1),
\quad 
y(0) = 0, \;\; y(1) = 0,
\end{align*}
where $\rrhs :  \Xi \to L^2(\domain)$ and
$\rdc : \Xi \to (0,\infty)$.
As in \cite[p.\ 78]{Volkwein1997}, 
$B : L^2(\domain_0) \to L^2(0,1)$
is defined by $(Bu)(x) = u(x)$ if $x \in \domain_0$
and $0$ else. We consider homogeneous Dirichlet boundary conditions, 
as it simplifies the derivation of a state stability estimate.

The weak form of the steady Burgers' equation
has at least one solution $S(u,\xi) \in H_0^1(0,1)$
for each $(u,\xi) \in L^2(\domain_0) \times \Xi$
\cite[Prop.\ 3.1]{Volkwein2000a}.
We assume that the solution $S(u,\xi)$ be unique
to ensure that the reduced formulation
\eqref{eq:sbocp} is well-defined.
A condition sufficient  for uniqueness is that $\kappa(\xi)$
is sufficiently large \cite[Prop.\ 3.1]{Volkwein2000a}. 
We formulate the uniqueness as an assumption.

\begin{itemize}
[wide,nosep,leftmargin=*]
\item 
$\kappa \colon \Xi \to \real$ is measurable
and there exists $\kappa_{\min}$, $\kappa_{\max} \in (0,\infty)$ such that
$\kappa_{\min}\leq \kappa(\xi) \leq \kappa_{\max}$
for all $\xi \in \Xi$.

\item 
$\rrhs : \Xi \to L^2(0,1)$ is strongly measurable
and there exists
$\rrhs_{\max} \in (0,\infty)$
such that $\norm[L^2(0,1)]{\rrhs(\xi)} \leq \rrhs_{\max}$
for all $\xi \in \Xi$.
\item 
For each $(u,\xi) \in L^2(\domain_0) \times \Xi$, 
the solution $S(u,\xi) \in H_0^1(0,1)$ to
the weak form of the steady Burgers' equation is unique.

\item 
$y_d \in L^2(0,1)$, $\adcsp \subset L^2(\domain_0)$ 
is nonempty, closed, and convex, and
$\alpha > 0$.
\end{itemize}
Throughout the section, we assume these conditions be satisfied.

Let us verify \Cref{ass:pobj}.
The constraints in \eqref{eq:sbocp} can be modeled using
the indicator function
$\psi = I_{\adcsp}$. Since $\adcsp$ is nonempty, closed,
and convex, the function $I_{\adcsp}$ is proper, convex, 
and lower semicontinuous \cite[Ex.\ 2.67]{Bonnans2013}.
The function $\pobj_1 : H_0^1(\domain) \to [0,\infty)$
defined by 
$\pobj_1(y)  = (1/2)\norm[L^2(\domain)]{y-y_d}^2$
is continuously differentiable. 
Putting together the pieces, we find that
\Cref{ass:pobj} holds true.

We define 
$E : H_0^1(0,1) \times L^2(\domain_0) \times \Xi 
\to H^{-1}(0,1)$
by
\begin{align*}
\begin{aligned}
	\dualpHzeroone[0,1]{E(y, u, \xi)}{v}
	&= \inner[L^2(0,1)]{\rdc(\xi)y'}{v'}
	+ \inner[L^2(0,1)]{yy'}{v}\\
	& \quad - \inner[L^2(0,1)]{\rrhs(\xi)}{v} 
	- \inner[L^2(0,1)]{Bu}{v}.
\end{aligned}
\end{align*}
Let $\iota : H_0^1(0,1) \to L^2(0,1)$
be the embedding operator of the compact embedding
$H_0^1(\domain) \embedding L^2(0,1)$.
We have $\dualpHzeroone[0,1]{\iota^*[Bu]}{v} = \inner[L^2(0,1)]{Bu}{v}$
for all $v \in H_0^1(\domain)$ and $u \in L^2(\domain_0)$.

We show that \Cref{ass:E} holds true.
The operator $E$ is well-defined \cite[pp.\ 76 and 80]{Volkwein1997}
and
$E(\cdot,\cdot,\xi)$ is twice continuously differentiable
for each $\xi \in \Xi$ 	\cite[p.\ 81]{Volkwein1997}.
For each $(u,\xi) \in L^2(\domain_0) \times \Xi$, 
$E_y(S(u,\xi),u,\xi)$ has a bounded inverse
\cite[p.\ A1866]{Kouri2013}.
Using  arguments similar to those in  \cref{subsec:boundary},
we can show that $E(y,u,\cdot)$ is measurable
for each $(y,u) \in H_0^1(\domain) \times L^2(\domain_0)$.
We conclude that \Cref{ass:E} holds true.

Using the gradient
formula \eqref{eq:Euz},
$(\iota^*)^* = \iota$,
and $E_u(S(u,\xi),u,\xi) = -\iota^*B$,
we find that
\begin{align}
\label{eq:sburgers_nablarobj1}
\nabla \rpobj_1(u,\xi) = -B^*\iota[z(u,\xi)],
\end{align}
where for each $(u,\xi) \in L^2(\domain_0) \times \Xi$, 
$z(u,\xi) \in H_0^1(0,1)$ solves
the adjoint equation: find $z \in H_0^1(0,1)$
with
\begin{align*}
\kappa(\xi)\inner[L^2(0,1)]{z'}{v'}
-\inner[L^2(0,1)]{S(u,\xi)z'}{v}
=
-\inner[L^2(0,1)]{S(u,\xi)-y_d}{v}
\end{align*}
for all $v \in H_0^1(0,1)$;
cf.\ \cite[205--206]{delosReyes2004} and \cite[p.\ 83]{Volkwein1997}.
Since $\iota$ is linear and compact, and $B$ is linear and bounded,
the operator $\coperator = -B^* \iota$ is compact
\cite[Thm.\ 8.2-5 and p.\ 427]{Kreyszig1978}.
We choose $M(u,\xi) = z(u,\xi)$. 

We establish \Cref{ass:pobjE}.
Using \cite[Thm.\ 8.2.9]{Aubin2009} and the measurability
of $S(u,\cdot)$ (see \Cref{lem:rpobj1islsc}), we can show that $z(u,\cdot)$
is measurable for all $u \in L^2(\domain_0)$.
The implicit function theorem can be used to show that
$z(\cdot, \xi)$ is continuous. Hence $z$ is a \Caratheodory\ mapping.
Next, we derive an $H_0^1(0,1)$-stability estimate
for the state. We have $\norm[L^2(0,1)]{Bu} \leq \norm[L^2(\domain_0)]{u}$
for all $u \in L^2(\domain_0)$.
Hence the operator norm of $B$ is less than or equal to one.
We have
$\norm[L^p(0,1)]{v} \leq \norm[H_0^1(0,1)]{v}$
for each $v \in H_0^1(0,1)$
and $1\leq p \leq \infty$ \cite[Lem.\ 3.4 on p.\ 9]{Volkwein1997}.
Hence
\friedrichs' constant $C_\domain$
satisfies $C_\domain \leq 1$.
Using integration by parts, we
have $\inner[L^2(0,1)]{yy'}{y} = 0$  for  all $y \in H_0^1(0,1)$
\cite[p.\ 72]{Volkwein1997}. Choosing $v = S(u,\xi)$ in 
the weak form of Burgers' equation, we obtain
\begin{align}
\label{eq:nsburgers}
\kappa_{\min} \norm[H_0^1(0,1)]{S(u,\xi)}
\leq \norm[L^2(0,1)]{\rrhs(\xi)} + \norm[L^2(\domain_0)]{u};
\end{align}
cf.\ \cite[p.\ 75]{Volkwein1997}. Next, we establish a stability estimate
for $M(u,\xi) = z(u,\xi)$.
Combining the $L^\infty(0,1)$-stability estimate
established in  \cite[Lem.\ 3.4 on p.\ 83]{Volkwein1997}
with
$(1+\eu^{2x})\eu^{x} \leq 2\eu^{3x}$
valid for all $x \geq 0$, we obtain
\begin{align}
\label{eq:sburgers_adjoint}
\norm[L^\infty(0,1)]{z(u,\xi)}
\leq 2\kappa(\xi)^{-1}
\eu^{3\kappa(\xi)^{-1}\norm[L^1(0,1)]{S(u,\xi)}}
\norm[L^2(0,1)]{S(u,\xi)-y_d}.
\end{align}
Choosing $v = z(u,\xi)$ in the adjoint equation and
using the H\"older and \friedrichs inequalities,
and $C_\domain \leq 1$, 
we obtain
\begin{align}
\label{eq:sburgers_adjoint'}
\kappa(\xi)\norm[H_0^1(0,1)]{z(u,\xi)}
\leq 
%\big(
\norm[L^2(0,1)]{S(u,\xi)} 
\norm[L^\infty(0,1)]{z(u,\xi)}
+\norm[L^2(0,1)]{S(u,\xi)-y_d}.
%\big).
\end{align}
Since $\adcsp$ is nonempty, there exists $u_0 \in \adcsp$. 
Combined with \eqref{eq:nsburgers} and the definition of
$\pobj_1$, 
we find that $\cE{\rpobj(u_0,\xi)} < \infty$.
Let $B_{\vadcsp^\rho(u_0)}$ be an open, bounded ball 
about zero 
containing $\vadcsp^\rho(u_0)$
with radius $\radius > 0$. We define
$\zeta_1(\xi) = (1/\kappa_{\min})\big(\norm[L^2(0,1)]{\rrhs(\xi)} + \radius + 
\norm[L^2(0,1)]{y_d}\big)$ and
\begin{align*}
\zeta(\xi) = (1/\kappa_{\min})\zeta_1(\xi)\big((2/\kappa_{\min})\zeta_1(\xi)
\eu^{(3/\kappa_{\min})\zeta_1(\xi)}
+1\big).
\end{align*}
Combining
\eqref{eq:sburgers_nablarobj1}
and the stability estimates
\eqref{eq:nsburgers},
\eqref{eq:sburgers_adjoint} and
\eqref{eq:sburgers_adjoint'},
we conclude that \Cref{ass:pobjE} holds true
with $\zeta$ being an essentially bounded random variable.

\subsection{Distributed control of a semilinear state equation}

We consider a distributed control problem
with  a semilinear state
equation based on those considered in  \cite[Sect.\ 5]{Kouri2020}
and \cite[Sect.\ 5.2]{Kouri2019a}.
Risk-neutral optimization of semilinear PDEs 
are also studied, 
for example, in \cite{Garreis2019a,Geiersbach2020}.
We refer the reader to
\cite[Chap.\ 9]{Ulbrich2011}
and
\cite[Chap.\ 4]{Troeltzsch2010a}
for the analysis of deterministic, distributed
control problems with semilinear PDEs.

We consider
\begin{align}
\label{eq:secop}
\min_{u\in \adcsp}\, 
(1/2)\cE{\norm[L^2(\domain)]{\maxo{1-S(u,\xi)}}^2}
+(\alpha/2)\norm[L^2(\domain)]{u}^2,
\end{align}
where  $\maxo{\cdot} = \max\{0,\cdot\}$,
$\alpha > 0$, 
and $\adcsp \subset L^2(\domain)$ is a nonempty, closed, and convex.
For each $(u,\xi) \in L^2(\domain) \times \Xi$,  $S(u,\xi) \in H^1(\domain)$
is the  solution to: find $y \in H^1(\domain)$ with
$E(y,u,\xi) = 0$, where
the operator
$E: H^1(\domain) \times L^2(\domain) \times \Xi \to H^1(\domain)^*$ 
is defined by  
\begin{align}
\label{eq:sese}
\begin{aligned}
	\dualp[H^1(\domain)]{E(y,u,\xi)}{v} = 
	&\inner[L^2(\domain)^{2}]{\rdc(\xi)\nabla y}{\nabla v}
	+\inner[L^2(\domain)]{g(\xi)y+y^3}{v} 
	\\
	& \quad - 
	\inner[L^2(\domain)]{B(\xi)[u]}{v}
	-\inner[L^2(\domain)]{\rrhs(\xi)}{v}.
\end{aligned}
\end{align}
Let $\iota : H^1(\domain) \to L^2(\domain)$
be the compact embedding operator of the compact embedding
$ H^1(\domain) \embedding  L^2(\domain)$
\cite[Thm.\ 1.14]{Hinze2009}.
For each $\xi \in \Xi$,
we define $B(\xi) = \iota \widetilde{B}(\xi) \iota^*$.
The operator 
$\widetilde{B}(\xi) : H^1(\domain)^* \to H^1(\domain)$ 
is the solution operator to a  parameterized PDE\@.
For each 
$(f,\xi) \in H^1(\domain)^* \times \Xi$, 
$\widetilde{B}(\xi)f \in H^1(\domain)$ is the solution to: 
find $w \in H^1(\domain)$ with 
\begin{align}
\label{eq:seseb}
\inner[L^2(\domain)^{2}]{r(\xi)\nabla w}{\nabla v}
+ \inner[L^2(\domain)]{w}{v}
= \dualp[H^1(\domain)]{f}{v}
\quad \text{for all} \quad v \in H^1(\domain).
\end{align}
Since  the embedding $H^1(\domain) \embedding L^2(\domain)$ is continuous,
the  operator $\iota^*$ is given
by  $\dualp[H^1(\domain)]{\iota^*[u]}{v} = \inner[L^2(\domain)]{u}{v}$
for all $(u,v) \in L^2(\domain) \times H^1(\domain)$
\cite[p.\ 21]{Bonnans2013}.

The assumptions stated next
ensure the existence and uniqueness of
solutions to  the PDE defined by the operator in \eqref{eq:sese} and 
the well-posedness of the operator $\widetilde{B}(\xi)$;
see \cite[Sects.\  3 and 5]{Kouri2020}.

\begin{itemize}[wide,nosep,leftmargin=*]
\item $\domain \subset \real^2$ is a bounded Lipschitz  domain.

\item 	$\kappa$, $g : \Xi \to L^\infty(\domain)$ are strongly measurable
and there exist $ \kappa_{\min}$, $ \kappa_{\max}$, 
$g_{\min}$, $g_{\max} \in (0,\infty)$ such that
$\kappa_{\min} \leq \kappa(\xi) \leq  \kappa_{\max}$
and $g_{\min}  \leq g(\xi) \leq  g_{\max}$
for all $\xi \in \Xi$.

\item 
$\rrhs \colon \Xi \to L^2(\domain)$ and 
$r \colon \Xi \to L^\infty(\domain)$ are strongly measurable
and there exist $\rrhs_{\max}$, $r_{\min}$, $r_{\max} \in (0,\infty)$
such that
$\norm[L^2(\domain)]{\rrhs(\xi)} \leq \rrhs_{\max}$ and
$r_{\min} \leq r(\xi) \leq   r_{\max}$ 
for all $\xi \in \Xi$.
\end{itemize}
Throughout the section, we assume these conditions be satisfied.

\Cref{ass:pobj} is fulfilled since the function
$\pobj_1 : H^1(\domain) \to [0,\infty)$ defined by
$\pobj_1(y) = (1/2)\norm[L^2(\domain)]{\maxo{1-\iota y}}^2$ is 
continuously differentiable \cite[p.\ 14]{Kouri2020}.
We have
$\Du_y \pobj_1(y) = -\iota^*\maxo{1-\iota [y]}$.
Since $\iota[y] = y$,
we have for all $y \in H^1(\domain)$,
\begin{align}
\label{eq:app3:dyj}
\norm[H^1(\domain)^*]{\Du_y \pobj_1(y)}
\leq \norm[L^2(\domain)]{\maxo{1-y}}
\leq \norm[L^2(\domain)]{1}+ \norm[H^1(\domain)]{y}.
\end{align}

For each $\xi \in \Xi$, 
the operator $E(\cdot,\cdot,\xi)$ is continuously differentiable
\cite[p.\ 14]{Kouri2020} 
and 
for each $(u,\xi) \in L^2(\domain) \times \Xi$, 
$E_y(S(u,\xi),u,\xi)$ has a bounded inverse
\cite[p.\ 9]{Kouri2020}.
Using arguments similar to those in \cref{subsec:boundary},
we can show that $E(y,u,\cdot)$ is measurable
for each $(y,u) \in H^1(\domain) \times L^2(\domain)$.
We find that \Cref{ass:E} holds true. 

We verify \Cref{ass:pobjE}.
For each $(u,\xi) \in L^2(\domain) \times \Xi$, 
the adjoint state $z(u,\xi) \in H^1(\domain)$
is the solution to: find $z \in H^1(\domain)$ with
\begin{align*}
\inner[L^2(\domain)^{2}]{\kappa(\xi)\nabla z}{\nabla v}
+\inner[L^2(\domain)]{g(\xi)z+3S(u,\xi)^2z}{v}
= \inner[L^2(\domain)]{\maxo{1-S(u,\xi)}}{v}
\end{align*}
for all $v \in H^1(\domain)$.
Choosing $v = z(u,\xi)$ and  using
\eqref{eq:app3:dyj}, we obtain the stability estimate
\begin{align}
\label{eq:sese_h1_adjoint}
\min\{\kappa_{\min},g_{\min}\}
\norm[H^1(\domain)]{z(u,\xi)} \leq 
\norm[L^2(\domain)]{1} + \norm[H^1(\domain)]{S(u,\xi)}.
\end{align}
Moreover, for all $f \in H^1(\domain)$
and $u \in L^2(\domain)$, we have
the stability estimates
(cf.\  \cite[Sects.\ 3 and 5]{Kouri2020})
\begin{align}
\label{eq:sese_h1_state}
\begin{aligned}
	\min\{r_{\min},1\}
	\norm[H^1(\domain)]{\widetilde{B}(\xi)f} 
	&\leq 	\norm[H^1(\domain)^*]{f},
	\\
	\min\{\kappa_{\min},g_{\min}\}\norm[H^1(\domain)]{S(u,\xi)} 
	& \leq 
	\norm[H^1(\domain)^*]{\iota^*[B(\xi)u]+\iota^*[b(\xi)]}.
\end{aligned}
\end{align}
Using calculus for adjoint operators
\cite[p.\ 235]{Kreyszig1978}
and $\iota = (\iota^*)^*$, we find
that $(\iota^*B(\xi))^* = \iota \widetilde{B}(\xi)^* \iota^*\iota$.
Consequently, the gradient formula \eqref{eq:Euz} yields
\begin{align*}
\nabla \rpobj_1(u,\xi) = 
-\iota[\widetilde{B}(\xi)^*\iota^*\iota z(u,\xi)].
\end{align*}
We choose $K = - \iota$ and
$M(u,\xi) = \widetilde{B}(\xi)^*\iota^*\iota z(u,\xi)$.
Using the implicit function theorem and \cite[Prop.\ 4.3]{Kouri2020}, 
we find that $z$ is a \Caratheodory\ mapping.
Combined with \eqref{eq:sese_h1_state}, we obtain that
$M(\cdot, \xi)$ is continuous for each $\xi \in \Xi$.
Fix $f$, $v \in H^1(\domain)^*$. Using 
\cite[Thm.\ 8.2.9]{Aubin2009}, we can show that
$\xi \mapsto \widetilde{B}(\xi)f$
is  measurable.
Hence 
$\xi \mapsto \dualp[H^1(\domain)]{v}{\widetilde{B}(\xi)f}$
is measurable
\cite[Thm.\ 1.1.6]{Hytoenen2016}.
Since $\dualpstar[H^1(\domain)]{H^1(\domain)^*}{\widetilde{B}(\xi)^*v}{f}
= 
\dualp[H^1(\domain)]{v}{\widetilde{B}(\xi)f}$
for all $\xi \in \Xi$,
the mapping 
$\xi \mapsto \widetilde{B}(\xi)^*v$ is measurable
\cite[Thm.\ 1.1.6]{Hytoenen2016}. Since 
$H^1(\domain)$ is separable,
$\xi \mapsto \widetilde{B}(\xi)^*$ is strongly measurable
\cite[Thm.\ 1.1.6]{Hytoenen2016}.
Combined with the composition rules
\cite[Prop.\ 1.1.28 and Cor.\ 1.1.29]{Hytoenen2016}, we can show that
$M(u,\cdot)$ is measurable.

Using \eqref{eq:sese_h1_state} and the fact that $\adcsp$ is nonempty, 
we find that there exists $u_0 \in \adcsp$ with
$\cE{\rpobj(u_0,\xi)} < \infty$. 
Let $B_{\vadcsp^\rho(u_0)}$ be an open, bounded
ball about zero containing $\vadcsp^\rho(u_0)$
and let $\radius$ be its radius. We define the random variable
\begin{align*}
\zeta(\xi) = 
\tfrac{1}{\min\{\kappa_{\min},g_{\min}\}}
\bigg(\norm[L^2(\domain)]{1} + \tfrac{\norm[L^2(\domain)]{\rrhs(\xi)}
	+\tfrac{1}{\min\{r_{\min},1\}}\radius}
{\min\{\kappa_{\min},g_{\min}\}}\bigg).
\end{align*}
Our assumptions  and \hoelder's inequality
ensure that $\zeta$ is integrable.

Combined with the stability estimates
\eqref{eq:sese_h1_adjoint} and 
\eqref{eq:sese_h1_state}, we conclude that
\Cref{ass:pobjE} holds true.

\section{Discussion}
\label{sec:conclusion}

The analysis of the SAA approach for PDE-constrained optimization
under uncertainty 
is complicated by the fact that the feasible sets are generally noncompact, 
stopping us from directly applying the
consistency results developed in the literature
on M-estimation and stochastic programming. 
Inspired by the consistency results in \cite{Shapiro2003,Shapiro2014},
we constructed compact subsets of the feasible set 
that contain the solutions to the stochastic programs
and eventually those to their SAA problems, allowing us to establish
consistency of the SAA optimal values and SAA solutions.
To construct such compact sets, we combined 
the adjoint approach, optimality conditions, 
and PDE stability estimates. We applied our framework to three
risk-neutral nonlinear PDE-constrained optimization problems.

We comment on four limitations of our approach. 
First, our construction of the compact sets exploits 
the positivity of the regularization 
parameter $\alpha$, limiting our approach
at first to PDE-constrained optimization
problems with strongly convex control regularization. 
However, 
we can add $(\alpha/2)\norm[L^2(\domain)]{\cdot}^2$
with $\alpha > 0$ to the objective function, allowing us to establish
the consistency of regularized SAA solutions. If $\adcsp$ is contained
in a ball with radius $r_{\text{ad}} > 0$, $\varepsilon > 0$, and 
$\alpha = 2\varepsilon/r_{\text{ad}}^2$, then 
solutions to the regularized SAA problem
provide $\varepsilon$-optimal solutions to the non-regularized 
SAA problem \eqref{eq:saa}.\footnote{%
A point $x \in X$ is an $\varepsilon$-optimal solution to
$\inf_{x \in X}\, f(x)$ if $f(x) \leq \inf_{x \in X}\, f(x) + \varepsilon$.}
Second, the analysis developed here demonstrates the
consistency of SAA optimal values and SAA optimal solutions, but not of
SAA critical points. Since the risk-neutral PDE-constrained optimization
problems considered here are generally nonconvex, 
a consistency analysis of SAA critical points would be desirable. 
However, even though risk-neutral nonlinear
PDE-constrained optimization problems and their SAA problems
are generally nonconvex, significant progress has been made in
establishing convexity properties of nonlinear PDE-constrained
optimization problems \cite{Gahururu2019,Hintermueller2020} and 
in developing verifiable conditions that can be used to
certify global optimality of critical points
\cite{AhmadAli2016}.
Third, the construction of the compacts subsets
performed in \Cref{sec:assumptions} 
exploits the fact that the feasible set \eqref{eq:adcsp}
of the SAA problems is the same as that of the risk-neutral problem.
Therefore, our approach does not allow 
for a consistency analysis for SAA problems defined by random
constraints, such as those
resulting from sample-based approximations of expectation constraints
\cite[pp.\ 168--170]{Shapiro2014}.
Fourth, our analysis  does not
apply to risk-averse PDE-constrained optimization problems, as it exploits
smoothness of the expectation function.
However, our approach may be generalized to allow for the  consistency
analysis
of risk-averse PDE-constrained programs, such as those defined via the 
superquantile/conditional
value-at-risk \cite{Kouri2016}.

\begin{appendix}
\section{Lack of inf-compactness}
\label{sec:saalevelsets}
Besides the feasible set's lack of compactness, 
the set of $\varepsilon$-optimal solutions to the SAA problem
and the level sets of the SAA problem's objective function
may be noncompact for risk-neutral
PDE-constrained optimization
problems. 
We illustrate this observation on 
the semilinear PDE-constrained
problem
\begin{align}
	\label{eq:intro:ocp}
	\min_{u\in \adcsp}\,
	(1/2)\cE{\norm[L^2(\domain)]{S(u,\xi)}^2}
	+(\alpha/2)\norm[L^2(\domain)]{u}^2,
\end{align}
where $\alpha > 0$, 
$\adcsp = \{\, u \in L^2(\domain) \colon \norm[L^2(\domain)]{u} \leq 2\,\}$,
$\domain \subset \real^2$ is a bounded
Lipschitz domain, and
$\Xi$ is as in \cref{sec:assumptions}. 
For each $(u,\xi) \in L^2(\domain) \times \Xi$, the state
$S(u,\xi) \in H_0^1(\domain)$ is the  solution to 
the semilinear PDE: find $y \in H_0^1(\domain)$ with
\begin{align}
	\label{eq:intro:sl}
	\inner[L^2(\domain)^2]{\rdc(\xi)\nabla y}{\nabla v}
	+\inner[L^2(\domain)]{y^3}{v} =
	\inner[L^2(\domain)]{u}{v}
	\;\; \text{for all} \;\; v \in H_0^1(\domain).
\end{align}
We assume that $\rdc \colon \Xi \to L^\infty(\domain)$
is strongly measurable and that there exists $\rdc_{\min} > 0$
with $\rdc(\xi)\geq \rdc_{\min}$ for all $\xi \in \Xi$.
The SAA problem of \eqref{eq:intro:ocp} is given by
\begin{align}
	\label{eq:intro:saa}
	\min_{u\in \adcsp}\,
	\frac{1}{2N}\sum_{i=1}^N \norm[L^2(\domain)]{S(u,\xi^i)}^2
	+(\alpha/2)\norm[L^2(\domain)]{u}^2,
\end{align}
where $\xi^1, \xi^2,\ldots$, are as in \cref{sec:assumptions}.

Let $\hat{\erpobj}_{N}$ be the objective function
of \eqref{eq:intro:saa} and let $C_\domain$ be	
\friedrichs' constant of the domain $\domain$.
For each $(u,\xi) \in L^2(\domain) \times \Xi$, we have
the stability estimate (cf.\ \cite[eqns.\ (2.11)]{Garreis2019a})
\begin{align}
	\label{eq:intro:stateequation}
	\norm[H_0^1(\domain)]{S(u,\xi)} \leq 
	(C_\domain/\kappa_{\min})\norm[L^2(\domain)]{u}.
\end{align}
The optimal value of the  risk-neutral
problem \eqref{eq:intro:ocp} and those
of the corresponding SAA problems \eqref{eq:intro:saa}
are zero, as $S(0,\xi) = 0$ for all $\xi \in \Xi$
and $0 \in \adcsp$.
We define $\varepsilon_{\max} = (C_\domain^2/\kappa_{\min})^2+\alpha$.
Let $\varepsilon > 0$ satisfy 
$\varepsilon \leq \varepsilon_{\max}$.
We define
$
V_{\varepsilon} = 
\big\{\, u \in L^2(\domain) \colon \,
\norm[L^2(\domain)]{u}^2 \leq \tfrac{2\varepsilon}
{(C_\domain^2/\kappa_{\min})^2+\alpha}
\,\big\}
$.
It holds that $V_{\varepsilon} \subset \adcsp$.
For each $u \in V_\varepsilon$, the stability estimate
\eqref{eq:intro:stateequation} 
and \friedrichs' inequality yield
\begin{align*}
	\frac{1}{2N} \sum_{i=1}^N \norm[L^2(\domain)]{S(u,\xi^i)}^2
	+ (\alpha/2)\norm[L^2(\domain)]{u}^2
	&\leq (1/2)(C_\domain^2/\kappa_{\min})^2 \norm[L^2(\domain)]{u}^2
	+ (\alpha/2)\norm[L^2(\domain)]{u}^2
	\\
	&\leq 0 + \varepsilon.
\end{align*}
Hence each $u \in V_\varepsilon$ is an $\varepsilon$-optimal solution
to the SAA problem \eqref{eq:intro:saa}.
The set $V_\varepsilon$ is a closed ball about zero with positive radius
because $\varepsilon > 0$.
Since $L^2(\domain)$ is infinite dimensional, this set is noncompact
\cite[Thm.\ 2.5-5]{Kreyszig1978}.
Therefore, as long as $\tilde{\varepsilon} > 0$, 
the level sets of the SAA objective function,
$\{ \, u \in \adcsp \colon \, \hat{\erpobj}_{N}(u) 
\leq \tilde{\varepsilon} \, \}$, are
noncompact, as they contain the noncompact set $V_\varepsilon$
with 
$\varepsilon = \min\{\tilde{\varepsilon},\varepsilon_{\max}\}$.
In this case, an inf-compactness condition 
(see \cite[p.\ 166]{Shapiro2014}) is violated.
\end{appendix}

\section*{Acknowledgements}
JM thanks Professor Alexander Shapiro for valuable
discussions about the SAA approach.

{\footnotesize\bibliography{saa4pde_consistency}}

\end{document}